\newcommand{\tcm}[1]{\textcolor{black}{#1}}
\newtheorem{theorem}{Theorem}[section]
\newtheorem*{theorem*}{Theorem}
\newtheorem{lemma}{Lemma}[section]
\newtheorem{remark}{Remark}[section]
\newtheorem{assumption}{Assumption}[section]
\numberwithin{equation}{section}
\begin{document}

\title{Optimal long-time decay rate of solutions  of complete monotonicity-preserving schemes for nonlinear time-fractional evolutionary equations
\thanks{The research of Dongling Wang is supported in part by National Natural Science Foundation of China under grants 11871057 and 91630205.
The research of Martin Stynes is supported in part by the National Natural Science Foundation of China under grants 12171025 and NSAF-U1930402.}}

\author{Dongling Wang\thanks{School of Mathematics and Computational Science, Xiangtan University, Xiangtan, Hunan 411105, China (\texttt{wdymath@xtu.edu.cn})} 
\ and
Martin Stynes\thanks{Applied and Computational Mathematics Division, Beijing Computational Science Research Center,
Beijing 100193, China (\texttt{m.stynes@csrc.ac.cn}). Corresponding author.}
}

\maketitle

\begin{abstract}
The solution of the nonlinear initial-value problem $\mathcal{D}_{t}^{\alpha}y(t)=-\lambda y(t)^{\gamma}$ for $t>0$ with $y(0)>0$, where  
$\mathcal{D}_{t}^{\alpha}$ is a Caputo derivative of order $\alpha\in (0,1)$ and $\lambda, \gamma$ are positive parameters, is known to exhibit $O(t^{-\alpha/\gamma})$ decay as $t\to\infty$. No corresponding result for any discretisation of this problem has previously been proved. In the present paper it is shown that for the class of complete monotonicity-preserving ($\mathcal{CM}$-preserving) schemes (which includes the L1 and Gr\"unwald-Letnikov schemes) on uniform meshes $\{t_n:=nh\}_{n=0}^\infty$, the discrete solution also has $O(t_{n}^{-\alpha/\gamma})$  decay as $t_{n}\to\infty$. This result is then extended to $\mathcal{CM}$-preserving discretisations of certain time-fractional nonlinear subdiffusion problems such as the time-fractional porous media and $p$-Laplace equations.  For the L1 scheme, the $O(t_{n}^{-\alpha/\gamma})$ decay result is shown to remain valid on a very general class of nonuniform meshes.
 Our analysis uses a discrete comparison principle with discrete subsolutions and supersolutions that are carefully constructed to give tight bounds on the discrete solution. Numerical experiments are provided to confirm our theoretical analysis.
\end{abstract}

\noindent\emph{Keywords:}  time-fractional evolutionary equations, power nonlinear, $\mathcal{CM}$-preserving schemes, polynomial decay rate.

\noindent{AMS MSC Classification:} Primary 65L12, 65M06

\section{Introduction}\label{sec:Introd}

Fractional-order differential equations offer modeling properties that are superior to classical integer-order differential equations in many physical processes with non-local effects or genetic memory characteristics. In particular,  fractional differential operators arise naturally in anomalous diffusion processes from a probability analysis based on a random walk model \cite{gal2020fractional, jin2021fractional}.The solutions of these fractional differential equations are of particular interest when time $t\to t_0$ (the initial time) and when $t\to +\infty$, as in these two regimes they behave very differently from the solutions of classical (integer-order) problems.

As $t\to t_0$, typical solutions of these problems exhibit weak singularities. Obviously this anomaly will affect the behaviour of numerical methods, and this phenomenon has been intensively studied; see for example the survey articles \cite{jin2019numerical,StyL1survey}.

The long-time behaviour of solutions as $t\to\infty$ has received much less attention and this regime is the focus of our paper. We shall consider both fractional ordinary differential equations (F-ODEs) and time-fractional initial-boundary value problems (F-PDEs). The long-time behaviour for a linear F-PDE was investigated in~\cite{GORS_JSC18}, but in the present paper we shall consider nonlinear problems whose analysis is much more difficult, both theoretically and numerically \cite{gal2020fractional, wang2020high}.

\subsection{Linear F-ODEs}
 As time $t\to\infty$,  solutions of time-fractional differential equations usually exhibit asymptotic behaviour that is completely different from the classical integer-derivative case.  While solutions of linear classical ODEs generally have exponential decay rates at equilibrium points,  the solutions of F-ODEs have algebraic decay rates, leading to so-called Mittag-Leffler stability \cite{cuesta2007asymptotic, vergara2015optimal}. 
 
It is of course desirable that as $t\to\infty$, the computed numerical solutions of F-ODEs display the same algebraic decay rates as the exact solutions of the F-ODEs.  For certain linear problems, this property was shown rigorously for certain schemes in \cite{cuesta2007asymptotic,wang2021mittag}. 
The main technical tools used in these papers are the discrete Laplace transform (i.e., generating functions) and discrete Tauberian-type results (i.e., singular analysis of generating functions) --- but these techniques are unsuitable for nonlinear models.

\subsection{Nonlinear F-ODEs}
Let $\mathcal{D}_t^{\alpha}y(t):=\frac{1}{\Gamma(1-\alpha)}\int_{0}^{t} (t-s)^{-\alpha}y'(s)\,ds$ denote the Caputo fractional derivative of order $\alpha\in (0, 1)$. (For general information about fractional calculus see \cite{gal2020fractional, jin2021fractional}.) Our paper will begin by investigating a class of numerical methods for the following nonlinear F-ODE:
\begin{equation}\label{FODE}
\mathcal{D}_{t}^{\alpha}y(t)=-\lambda y(t)^{\gamma} \ \text{ for }t>0, \ \text{with }y(0) = y_0>0,
\end{equation}
where $\lambda$ and $\gamma$ are positive parameters. The asymptotic behaviour of the exact solution of \eqref{FODE} is described in the next result.

\begin{lemma} \cite[Theorem 7.1]{vergara2015optimal} \label{lem:non-decay}
Let $y(t)\in H^{1}_{1, loc}(\mathbb{R}^+)$ be the solution of \eqref{FODE}. 
Then there exist positive constants $C_{1}, C_{2}$, which are independent of $t$, such that 
\end{lemma}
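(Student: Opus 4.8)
The plan is to establish the two-sided estimate
\[
C_1(1+t)^{-\alpha/\gamma}\le y(t)\le C_2(1+t)^{-\alpha/\gamma}
\]
by the method of sub- and supersolutions, the continuous counterpart of the discrete comparison principle that drives the rest of the paper. The first ingredient is a comparison principle for \eqref{FODE}: because the reaction term $u\mapsto\lambda u^{\gamma}$ is nondecreasing on $[0,\infty)$, any nonnegative continuous $\underline{y},\bar{y}$ satisfying
\[
\mathcal{D}_t^{\alpha}\underline{y}+\lambda\underline{y}^{\gamma}\le 0\le \mathcal{D}_t^{\alpha}\bar{y}+\lambda\bar{y}^{\gamma}\quad(t>0),\qquad \underline{y}(0)\le y_0\le\bar{y}(0),
\]
obey $\underline{y}(t)\le y(t)\le\bar{y}(t)$ for all $t\ge0$; this reduces the lemma to producing a subsolution and a supersolution of the correct algebraic order.

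The central ingredient is the long-time behaviour of the Caputo derivative on the power profile $w_{C}(t):=C(1+t)^{-\alpha/\gamma}$. Since $w_C'\in L^{1}(0,\infty)$ with $\int_0^{\infty}w_C'(s)\,ds=-C$, the convolution defining $\mathcal{D}_t^{\alpha}w_C$ is dominated for large $t$ by the contribution near $s=0$, where the kernel equals $t^{-\alpha}(1+o(1))/\Gamma(1-\alpha)$; a short Laplace-transform computation confirms
\[
\mathcal{D}_t^{\alpha}w_C(t)=-\frac{C}{\Gamma(1-\alpha)}\,t^{-\alpha}\bigl(1+o(1)\bigr),\qquad t\to\infty .
\]
The exponent $\alpha/\gamma$ is chosen precisely so that $\lambda w_C^{\gamma}=\lambda C^{\gamma}(1+t)^{-\alpha}$ decays at the same rate, whence
\[
\mathcal{D}_t^{\alpha}w_C+\lambda w_C^{\gamma}=\Bigl(-\tfrac{C}{\Gamma(1-\alpha)}+\lambda C^{\gamma}\Bigr)t^{-\alpha}+(\text{lower order}),
\]
so the sign of the leading bracket is governed by whether $\lambda C^{\gamma-1}$ exceeds or falls short of $1/\Gamma(1-\alpha)$. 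Taking $C$ in the appropriate range makes $w_C$ a supersolution and a second constant $c$ makes $w_c$ a subsolution, yielding $C_2=C$ and $C_1=c$.

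The hard part, I expect, will be to upgrade these asymptotic identities to inequalities that hold for \emph{every} $t>0$ while simultaneously respecting the ordering at $t=0$. For $\gamma>1$ the monotonicity of $C\mapsto\lambda C^{\gamma-1}$ cooperates with the initial condition: a large $C$ gives a supersolution with $\bar{y}(0)=C\ge y_0$, and a small $c\le y_0$ gives a subsolution. The sublinear case $\gamma<1$ is genuinely more delicate, since the supersolution requirement forces $C\le(\lambda\Gamma(1-\alpha))^{1/(1-\gamma)}$, which can clash with $\bar{y}(0)=C\ge y_0$ when $y_0$ is large (and symmetrically for the subsolution when $y_0$ is small), while the borderline $\gamma=1$ degenerates to the classical linear Mittag--Leffler estimate. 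To handle $\gamma<1$ I would first prove that $y$ is positive and nonincreasing, so that after a finite transient time $T$ the solution lies in the range where the power-law barrier is admissible, and then rerun the comparison argument on $[T,\infty)$ with $y(T)$ as initial datum. Closing the argument uniformly therefore requires replacing the bare ``$1+o(1)$'' by explicit two-sided bounds of the form $c_{\beta}\,t^{-\alpha}\le-\mathcal{D}_t^{\alpha}(1+t)^{-\beta}\le C_{\beta}\,t^{-\alpha}$ valid for all $t\ge1$; proving this clean barrier estimate for the fractional derivative of the power profile is the technically heaviest step and the linchpin of the whole proof, after which the choices of $C$ and $c$ and the comparison principle deliver the stated decay rate.
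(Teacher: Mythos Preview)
The paper does not prove this lemma; it is quoted from \cite{vergara2015optimal}. However, the paper's proof of the discrete analogue (Theorem~\ref{thm:main}) is explicitly modelled on the continuous argument of \cite[Section~7]{vergara2015optimal}, so one can infer the intended construction from there.

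Your overall strategy---comparison principle with sub- and supersolutions decaying like $t^{-\alpha/\gamma}$---is exactly right and matches the original. The difference lies in the barrier construction. You propose the single smooth profile $w_C(t)=C(1+t)^{-\alpha/\gamma}$ and then wrestle with the resulting difficulties: matching the initial datum while simultaneously fixing the sign of the leading $t^{-\alpha}$ coefficient (which, as you note, pulls $C$ in opposite directions when $\gamma<1$), and upgrading the asymptotic expansion of $\mathcal{D}_t^{\alpha}w_C$ to a two-sided bound valid for all $t$. The original proof sidesteps both issues by using \emph{piecewise} barriers. The supersolution is simply the constant $y_0$ on an initial interval $[0,\hat t]$ (so $\mathcal{D}_t^{\alpha}\bar y=0\ge -\lambda\bar y^{\gamma}$ trivially there) and then $C\,t^{-\alpha/\gamma}$ with $C=y_0\hat t^{\alpha/\gamma}$; choosing $\hat t$ large makes $C$ as large as one likes, which decouples the initial-condition constraint from the asymptotic sign condition and works uniformly in $\gamma$. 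The subsolution is $y_0-\mu\,t^{\alpha}/\Gamma(1+\alpha)$ on an initial interval (exploiting the exact identity $\mathcal{D}_t^{\alpha}[t^{\alpha}/\Gamma(1+\alpha)]=1$) and then a power tail, with the switching time chosen so the pieces match monotonically.

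Your approach can probably be pushed through, but the ``technically heaviest step'' you flag---a sharp two-sided estimate for $\mathcal{D}_t^{\alpha}(1+t)^{-\beta}$ valid for all $t$---is genuinely awkward, and the $\gamma<1$ case forces the extra transient-time detour you describe. The piecewise construction buys you exact computability of $\mathcal{D}_t^{\alpha}$ on each piece and complete freedom in the tail constant, which is why the original argument (and the paper's discrete version) is short and case-free in $\gamma$.
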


This result agrees with the linear case $\gamma=1$, where the solution can be written in terms of a Mittag-Leffler function as $y(t)=E_{\alpha,1}(-\lambda t^{\alpha})y_0$, which implies that $y(t)=O(t^{-\alpha})$ as $t\to\infty$.

Lemma~\ref{lem:non-decay} shows that the asymptotic decay rates of the solution of \eqref{FODE} with $\alpha\in (0, 1)$ differs significantly from the classical ODE with $\alpha=1$, where one has algebraic decay $y(t)\sim C t^{-1/(\gamma-1)}$ for $\gamma >1$, exponential decay $y(t)= C e^{\lambda t}$ for $\gamma =1$, and extinction in finite time for $\gamma <1$; see \cite{feng2018continuous, vergara2015optimal}. 

\subsection{Time-fractional nonlinear subdiffusion problems}\label{sec:timefracprob}
Lemma~\ref{lem:non-decay} is employed to establish decay rate estimates for exact solutions of several time-fractional nonlinear PDEs in \cite{vergara2015optimal, dipierro2019decay, affili2019decay}.
 Consider the initial-boundary value problem
\begin{equation}\label{eq:FPDEs}
\begin{split}
 \left\{
\begin{aligned}
&\mathcal{D}_{t}^{\alpha}u(x, t)+\mathcal{N}[u](x, t)=0\ \text{ for } x\in\Omega,\, t>0, \\
&u(x, t)=0 \ \text{ for } x\in\mathbb{R}^{d}\setminus \Omega,\, t>0,\\
&u(x, 0)=u_0(x) \ \text{ for } x\in\Omega,
\end{aligned}
\right.
\end{split}
\end{equation}
where $\Omega\subset \mathbb{R}^{d}$ is bounded with smooth boundary and $\mathcal{N}[u]$ is a possibly nonlinear operator. 
Like~\cite{dipierro2019decay}, we make the following assumptions: the initial data $u_0$ is  nonnegative and does not vanish identically, with  $u_{0}\in L^{\infty}(\mathbb{R}^{d})$ and supp $u_{0}\subset\Omega$, and the solution $u$ is nonnegative with  $u\in L^{q}((0,T), L^{q}(\Omega))$ for every $T>0$ and for every $q\in(1, \infty)$.

Again following \cite{dipierro2019decay}, we make the structural assumption that there exist 
\tcm{$s\in(1, \infty)$,  $\gamma\in(0, \infty)$ 
and $C_s>0$} such that the solution $u$ of \eqref{eq:FPDEs} satisfies
\begin{equation} \label{eq:FPstr}
\tcm{\|u(\cdot, t)\|_{L^{s}(\Omega)}^{s-1+\gamma}\leq C_s \int_{\Omega} u^{s-1}(x, t) \mathcal{N}[u](x, t)\,dx }
	\quad \text{for }t>0, 
\end{equation}
where $\tcm{\|u(\cdot, t)\|_{L^{s}(\Omega)}(t) := \left( \int_{\Omega} |u(x, t)|^{s} dx \right)^{1/s}}$.  
Roughly speaking, the assumption \eqref{eq:FPstr} is a type of uniform ellipticity of the operator $\mathcal{N}$ with respect to the spatial variable that says the problem has diffusion-like properties.  It is clearly satisfied in the simplest case $\mathcal{N}[u]=-\Delta u$ with $s=2$ and $\gamma=1$. For more examples, including the  time-fractional porous media and $p$-Laplace equations, see \cite{vergara2015optimal, dipierro2019decay, affili2019decay} or Section~\ref{sec:NumExp} below. (Note that the existence of bounded weak solutions of the time-fractional porous media equation and more general nonlinear and degenerate evolutionary integro-differential equations has recently been proved in~\cite{wittbold2021bounded}.)

Under the above assumptions,  one has the following result.

\begin{lemma} \cite[Theorem 1.1]{dipierro2019decay} \label{lem:non-decay2}
Let $u$ be the solution of \eqref{eq:FPDEs} with the structural condition \eqref{eq:FPstr}. Then 
\begin{align}
\mathcal{D}_{t}^{\alpha}\tcm{\|u\|_{L^{s}(\Omega)}(t)} &\leq - \tcm{\frac{\|u\|_{L^{s}(\Omega)}^{\gamma}(t)}{C_s}} \text{ for all } t>0 \label{eq:FPDEesti}\\
\intertext{and there exists $C_*=C_*(\tcm{C_s},\gamma, \alpha, u_0)>0$ such that} 
 \tcm{\|u\|_{L^{s}(\Omega)}(t)} &\leq \frac{C_{*}}{1+t^{\alpha/\gamma}} \text{ for all } t>0.  \label{eq:FPdecay}
\end{align}
\end{lemma}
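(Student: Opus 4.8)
The plan is to treat the two displayed claims in turn: first establish the scalar differential inequality \eqref{eq:FPDEesti}, which collapses the spatial PDE into a one-dimensional fractional ODE for $v(t):=\|u(\cdot,t)\|_{L^{s}(\Omega)}$, and then deduce the decay bound \eqref{eq:FPdecay} by comparing $v$ with the exact solution of the scalar model \eqref{FODE} and invoking Lemma~\ref{lem:non-decay}. The engine behind \eqref{eq:FPDEesti} is an $L^{s}$ fractional chain-rule (or ``fractional H\"older'') inequality,
\[
\mathcal{D}_{t}^{\alpha}v(t)\;\le\; v(t)^{1-s}\int_{\Omega}u^{s-1}(x,t)\,\mathcal{D}_{t}^{\alpha}u(x,t)\,dx,
\]
after which the equation $\mathcal{D}_{t}^{\alpha}u=-\mathcal{N}[u]$ and the structural hypothesis \eqref{eq:FPstr} finish this half almost mechanically.

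To prove this chain-rule inequality I would use the nonlocal (Marchaud-type) form of the Caputo derivative,
\[
\mathcal{D}_{t}^{\alpha}v(t)=\frac{v(t)-v(0)}{\Gamma(1-\alpha)\,t^{\alpha}}+\frac{\alpha}{\Gamma(1-\alpha)}\int_{0}^{t}\frac{v(t)-v(\tau)}{(t-\tau)^{1+\alpha}}\,d\tau,
\]
in which both kernels are positive. The crux is the pointwise-in-$\tau$ estimate that, for each fixed $t$ and every $\tau\in[0,t)$,
\[
v(t)-v(\tau)\;\le\; v(t)^{1-s}\int_{\Omega}u^{s-1}(x,t)\bigl(u(x,t)-u(x,\tau)\bigr)\,dx.
\]
Writing $a=u(\cdot,t)$ and $b=u(\cdot,\tau)$, the right-hand side equals $\|a\|_{L^{s}}-\|a\|_{L^{s}}^{1-s}\int_{\Omega}a^{s-1}b\,dx$, so the claim is equivalent to $\|a\|_{L^{s}}^{1-s}\int_{\Omega}a^{s-1}b\,dx\le\|b\|_{L^{s}}$, which is precisely H\"older's inequality with conjugate exponents $s/(s-1)$ and $s$; nonnegativity of $u$ guarantees $u^{s-1}\ge 0$ and makes every term well defined. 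Substituting this estimate into the two positive-kernel contributions, pulling out the factor $v(t)^{1-s}$, and using Fubini to interchange $\int_{\Omega}$ with $\int_{0}^{t}$, the boundary and memory terms reassemble (via the same nonlocal representation applied to $u(x,\cdot)$) into $v(t)^{1-s}\int_{\Omega}u^{s-1}\mathcal{D}_{t}^{\alpha}u\,dx$.

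With the chain rule in hand, combining it with $\mathcal{D}_{t}^{\alpha}u=-\mathcal{N}[u]$ and \eqref{eq:FPstr} gives
\[
\mathcal{D}_{t}^{\alpha}v(t)\le -v(t)^{1-s}\int_{\Omega}u^{s-1}\mathcal{N}[u]\,dx\le -\frac{1}{C_{s}}\,v(t)^{1-s}\,\|u(\cdot,t)\|_{L^{s}(\Omega)}^{s-1+\gamma}=-\frac{v(t)^{\gamma}}{C_{s}},
\]
which is exactly \eqref{eq:FPDEesti}. For \eqref{eq:FPdecay} I would invoke a fractional comparison principle: let $z$ solve \eqref{FODE} with $\lambda=1/C_{s}$ and $z(0)=\|u_{0}\|_{L^{s}(\Omega)}>0$ (positive since $u_{0}$ is nonnegative and not identically zero). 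Because $v$ satisfies $\mathcal{D}_{t}^{\alpha}v\le -v^{\gamma}/C_{s}$ with $v(0)=z(0)$ and the reaction $y\mapsto -y^{\gamma}/C_{s}$ is nonincreasing, comparison gives $v(t)\le z(t)$ for all $t>0$, and Lemma~\ref{lem:non-decay} applied to $z$ yields $z(t)\le C_{*}/(1+t^{\alpha/\gamma})$ with $C_{*}$ depending only on $C_{s},\gamma,\alpha$ and $\|u_{0}\|_{L^{s}(\Omega)}$.

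The main obstacle is analytic rather than algebraic. The steps above presuppose enough regularity in $t$ to write the nonlocal representation of $\mathcal{D}_{t}^{\alpha}v$, to apply Fubini, and to differentiate the $L^{s}$ norm, whereas $u$ is in general only a weak solution in the class $L^{q}((0,T),L^{q}(\Omega))$. I would therefore first carry out the chain-rule step for smooth (or mollified/Galerkin-approximated) functions, where every operation is legitimate, and then pass to the limit using the stated hypotheses on $u_{0}$ and $u$ together with the positivity of the Caputo kernel, which is stable under these approximations. This regularisation-and-limit procedure --- not the H\"older inequality at its core --- is where the genuine technical care is required.
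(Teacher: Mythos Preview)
The paper does not supply its own proof of this lemma; it is quoted from \cite{dipierro2019decay}. Your proposal is correct and is exactly the continuous counterpart of what the paper \emph{does} prove in the discrete setting: your H\"older step $\int_{\Omega}a^{s-1}b\le\|a\|_{L^s}^{s-1}\|b\|_{L^s}$ is precisely the inequality used in Lemma~\ref{lem:keyineq} (there applied term-by-term to the discrete convolution, here applied pointwise in $\tau$ against the positive Marchaud kernel), and your comparison argument for \eqref{eq:FPdecay} is the continuous version of the supersolution argument in Theorem~\ref{thm:disFPDE}. One small point worth making explicit: your chain-rule inequality presupposes $v(t)>0$, so when $\|u(\cdot,t)\|_{L^s}=0$ you should observe directly that \eqref{eq:FPDEesti} holds trivially, just as the paper does in the discrete proof of Theorem~\ref{thm:disFPDE}.
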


Lemma \ref{lem:non-decay2} exhibits a characteristic attribute of long-time behaviour of solutions of time-fractional PDEs: power-law decay. This phenomenon is fundamentally different from the exponential decay rate of solutions of classical parabolic PDEs. Our paper is mainly concerned with a discrete analogue of Lemma~\ref{lem:non-decay2}.

\subsection{Results and structure of the paper} 
We consider $\mathcal{CM}$-preserving schemes, which were recently introduced in \cite{LiWang2019} as structure-preserving discretisations of Caputo derivatives.  The well-known L1 and Gr\"unwald-Letnikov schemes are particular examples of this class of methods.  $\mathcal{CM}$-preserving schemes  have many desirable properties \cite{LiWang2019}, such as satifying
a discrete fractional comparison principle, enjoying  good numerical stability, and preserving the monotonicity of numerical solutions of nonlinear scalar equations. In the present paper, we shall prove discrete analogues of  Lemmas~\ref{lem:non-decay} and~\ref{lem:non-decay2} for  $\mathcal{CM}$-preserving schemes; that is, we show that the solutions of such schemes are optimal in the sense that they have precisely the same long-time decay rates as the exact solutions of the nonlinear F-ODE~\eqref{FODE} and F-PDE~\eqref{eq:FPDEs}. 

The paper is organised as follows. In Section \ref{sec:CMsche} we define $\mathcal{CM}$-preserving schemes and recall their properties. The discrete analogue of Lemma~\ref{lem:non-decay} on uniform meshes is derived in Section~\ref{sec:discFODE}, using a discrete comparison principle \tcm{to bound the computed solution} by delicately-constructed discrete subsolutions and supersolutions. Section~\ref{sec:L1} considers the particular example of the L1 scheme, and shows that on very general nonuniform meshes one still gets the long-time decay rate of Section~\ref{sec:discFODE}.  The results of Section~\ref{sec:discFODE} are used in Section~\ref{sec:discFPDE} to establish the analogue of Lemma~\ref{lem:non-decay2} on uniform temporal meshes, proving that the $L^s(\Omega)$ norm of computed solutions of discrete-time methods for time-fractional PDEs exhibit the same long-time decay rate as the exact solution, under appropriate structural assumptions. Finally, numerical experiments in Section~\ref{sec:NumExp} demonstrate the sharpness of our theoretical results.\\

\noindent\emph{Notation.} As usual, we use $C$ to represent a generic positive constant, which may take different values at different occasions, but is always independent of $t$ or $n$.
For each $r\in \mathbb{R}$, let $\lceil r\rceil$ denote the smallest integer not less than~$r$.  
\tcm{By $u_n=O(n^{r})$ as $n\to\infty$, we mean that there exists a constant $C>0$ such that $|u_n|\le Cn^r$ for all sufficiently large~$n$.  By $u_{n}\sim v_{n}$ as $n\to\infty$, we mean that  $\lim_{n\to\infty}u_n/v_{n}=C$, where $C>0$ is some constant.
}

\section{$\mathcal{CM}$-preserving schemes}\label{sec:CMsche}

The sequence $\delta_{d}:=(1, 0, 0,...)$ is the convolutional identity: one has $u*\delta_d = \delta_d*u = u$ for any sequence $\omega=\{\omega_n\}_{n=0}^\infty$.
The \emph{convolution inverse} of $\omega$ is defined to be the sequence $\omega^{(-1)}$ such that $\omega*\omega^{(-1)}=\omega^{(-1)}*\omega=\delta_{d}$. It is easy to see that $\omega^{(-1)}$ exists if and only if $\omega_0\ne 0$.

Given a sequence $v=(v_0, v_1, \ldots)$, its \emph{generating function $F_v$} is defined by 
$F_v(z)=\sum_{n=0}^{\infty} v_n z^n$  for $z\in\mathbb{C}$, 
where $\mathbb{C}$ denotes the complex plane.
This function should be  understood in the sense of analytic continuation: we choose the continuation that has the largest possible domain in the upper half-plane and is symmetric about the real axis. 
For example, the generating function of the sequence $(1, 1, ...)$ is given by $F_1(z):=\frac{1}{1-z}$, which is defined at all points in the  complex plane except $z=1$.
It is straightforward to verify that $F_{u*v}(z)=F_u(z)F_v(z)$ for any sequences $u$ and $v$. Hence, the generating functions of $\omega$ and $\omega^{(-1)}=:a$ are related by $F_a(z)=\frac{1}{F_{\omega}(z)}$.  

To solve the F-ODE \eqref{FODE}, we work on a uniform mesh $\{t_{n}\}_{n=0}^\infty$ defined by $t_n=nh$ with step size $h>0$.
Consider discretisations of the Caputo fractional derivative $\mathcal{D}_t^{\alpha} y(t_n)$ in the form                                                       
 \begin{equation}\label{eq:CMsch}
\begin{split}
\mathcal{D}_h^{\alpha} (y_{n}) =\frac{1}{h^{\alpha}}\sum\limits_{k=0}^{n}\omega_{k}(y_{n-k}-y_{0})
=\frac{1}{h^{\alpha}}\left(\delta_{n}y_{0} + \sum\limits_{k=0}^{n-1}\omega_{k}y_{n-k}\right)\ \text{ for }n=1,2,\dots
\end{split}
\end{equation}
where $\delta_{n}:=-\sum_{k=0}^{n-1}\omega_{k}$ and $y_k$ denotes the numerical approximation of $y(t_k)$ for each~$k$.
Schemes of this type clearly conserve mass, i.e., $\mathcal{D}_h^{\alpha} (y_{n}) =0$ for any constant sequence $\{y_n\}$.

A function $g:[0,\infty)\to \mathbb{R}$ is said to be \emph{completely monotone ($\mathcal{CM}$)} \cite[Section 3.1.2]{jin2021fractional}  if $(-1)^ng^{(n)}(x)\ge 0$ for all $x$ and $n$. Its discrete analogue: a sequence $v= (v_0,v_1, \dots)$ is said to be $\mathcal{CM}$ \cite[Section III.4]{Widder1941} if $((I-E)^jv)_k\ge 0$ for all $j,k\ge 0$, where $(Ev)_j:= v_{j+1}$.

Again setting $a:=\omega^{(-1)}$, we say (following \cite{LubSIAMJMA86}) that the discretisation \eqref{eq:CMsch}  is \emph{consistent} if  $h^{\alpha}F_a(e^{-h})=1+o(1)$ as $h\to 0^+$. The sequence $\omega$ corresponds to to the Caputo derivative $\mathcal{D}_t^{\alpha}$, so its convolution inverse~$a$ corresponds to the Riemann-Liouville fractional integral of order~$\alpha$. The kernel $k_{\alpha}(t) := t_+^{\alpha-1}/\Gamma(\alpha)$ of the Riemann-Liouville  integral is evidently a completely monotone function 
and we wish our discretisation \eqref{eq:CMsch} to inherit this property in order to preserve as much structure as possible. Thus, we say that a consistent discretisation \eqref{eq:CMsch} of $\mathcal{D}_t^{\alpha}$ is
\emph{$\mathcal{CM}$-preserving} if the sequence $a=\omega^{(-1)}$ is a $\mathcal{CM}$ sequence. See \cite{LiWang2019} for more information about $\mathcal{CM}$-preserving discretisations of $\mathcal{D}_t^{\alpha}$, which have several good properties.

Two particular examples of $\mathcal{CM}$-preserving schemes are the Gr\"{u}nwald-Letnikov and L1 discretisations.
For the Gr\"{u}nwald-Letnikov formula, the weights $\omega_{k}$ are the $k$-th coefficients of the generating function $F_\omega(z)=(1-z)^\alpha$; they can be computed iteratively from
$\omega_{0}=1,\ \omega_{k}=\left(1-\frac{\alpha+1}{k}\right)\omega_{k-1}$  for $k=1,2,\dots$.
For the  L1 scheme, one has 
\[
\omega_{0}=\frac{1}{\Gamma(2-\alpha)},\quad  
	\omega_{k}=\frac{1}{\Gamma(2-\alpha)}\left[(k+1)^{1-\alpha}-2k^{1-\alpha}+(k-1)^{1-\alpha}\right]\ \text{ for }k=1,2,\dots
\]
and consequently $\delta_{n}=-\sum\limits_{k=0}^{n-1}\omega_{k} = \left[(n-1)^{1-\alpha}-n^{1-\alpha}\right]/\Gamma(2-\alpha)$.

For both these schemes, the weights $\omega_{k}$ and $\delta_{n}$  have the convenient properties
\begin{equation}\label{eq:coeff}
\begin{cases}
      &\hbox{(i)} ~\omega_{0}>0, ~ \omega_{1}<\omega_{2}<\dots<\omega_{n}<0 \quad \hbox{(monotonicity)};\\
      &\hbox{(ii)}~ \sum\limits_{k=0}^{\infty}\omega_{k}= 0\quad \hbox{(conservation)}; \\
      &\hbox{(iii)}~ \omega_{n}=O(n^{-1-\alpha}), ~ \delta_{n}=-\sum\limits_{k=0}^{n-1}\omega_{k}= O(n^{-\alpha})
      	\quad \hbox{(uniform decay rate)}.\\
    \end{cases}
\end{equation}
These properties  imply that there exist constants $0<C_{3}\leq C_{4}$, which are independent of $n$, such that 
\begin{equation} \label{eq:coeffdecay}
\begin{split}
 \frac{C_{3}}{n^{1+\alpha}} \leq |\omega_{n}| \leq  \frac{C_{4}}{n^{1+\alpha}}\,,
 \quad  \frac{C_{3}}{n^{\alpha}} \leq |\delta_{n}| \leq  \frac{C_{4}}{n^{\alpha}}\,, \quad 
   \frac{C_{3}}{n^{\alpha}} \leq \sum_{k=n}^{\infty} |\omega_{k}| \leq  \frac{C_{4}}{ n^{\alpha}}\ \text{ for } n=1,2,\dots
\end{split}
\end{equation}

In fact, all $\mathcal{CM}$-preserving schemes satisfy~\eqref{eq:coeff} and~\eqref{eq:coeffdecay}; see \cite{LiLiu2018,LiWang2019}. These properties are our main assumptions in Section~\ref{sec:discFODE} when establishing the optimal long-time decay rate for numerical solutions of~\eqref{FODE}.  

\begin{remark}
When one moves to fractional schemes that are analogues of high-order (i.e., at least second-order) classical schemes such as BDF2, Crank-Nicolson, and the trapezoidal formula, then~\eqref{eq:coeff} and~\eqref{eq:coeffdecay} may no longer hold true \cite{LiWang2019}; in particular, the monotonicity property can be lost because $\omega_{k}>0$ for some $k\geq 1$, and consequently it becomes very difficult to establish energy inequalities such as Lemma~\ref{lem:keyineq} below.
\end{remark}

\section{Optimal decay rate for numerical solutions of F-ODEs}\label{sec:discFODE}
In this section we shall show (Theorem~\ref{thm:main}) that when the F-ODE \eqref{FODE} is solved numerically using a $\mathcal{CM}$-preserving scheme on the uniform mesh $t_n=nh$ for $n=0,1,\dots$, then the computed solution has exactly the same long-time decay rate as Lemma~\ref{lem:non-decay} showed for the exact solution of~\eqref{FODE}, i.e., its decay rate is optimal. This result will be proved analogously to the continuous case, viz., by constructing appropriate discrete subsolutions and supersolutions  that share the same long-time decay rate, then applying a discrete fractional comparison principle.

\begin{lemma}[Discrete fractional comparison principle] \cite[Proposition 2.3]{LiWang2019} \label{lem:Liwang21}
Let $\mathcal{D}_{h}^{\alpha} $ be the $\mathcal{CM}$-preserving discrete operator of \eqref{eq:CMsch}.
Let $f(\cdot)$ be nondecreasing.  Suppose that the sequences $\{u_{j}\}_{j=0}^{\infty}, \{y_{j}\}_{j=0}^{\infty},  \{v_{j}\}_{j=0}^{\infty}$ satisfy $u_0\leq y_0\le v_0$ and
 \begin{equation*} 
\mathcal{D}_{h}^{\alpha} (u_{n})  +f(u_{n})\leq 0, \quad 
\mathcal{D}_{h}^{\alpha} (y_{n})  +f(y_{n})= 0, \quad 
0 \le\mathcal{D}_{h}^{\alpha} (v_{n}) +f(v_{n})  \ \text{ for }n\geq 1.
\end{equation*}
Then $u_{n}\leq y_n \le v_{n}$ for $n=0,1,2,\dots$. 
\end{lemma}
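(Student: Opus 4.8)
The plan is to prove the two one-sided bounds $u_n\le y_n$ and $y_n\le v_n$ separately; since the arguments are mirror images, I would carry out only the subsolution bound $u_n\le y_n$ in detail and obtain the supersolution bound by the same reasoning with the inequalities reversed. First I would set $w_n:=y_n-u_n$, so that $w_0=y_0-u_0\ge 0$ by hypothesis. Because the discrete operator in \eqref{eq:CMsch} is affine and its constant ($y_0$-dependent) part cancels under differencing, one has $\mathcal{D}_h^\alpha(y_n)-\mathcal{D}_h^\alpha(u_n)=\mathcal{D}_h^\alpha(w_n)$; subtracting the subsolution inequality from the equation satisfied by $y$ then yields
\[
\mathcal{D}_h^\alpha(w_n)+\bigl[f(y_n)-f(u_n)\bigr]\ge 0\qquad\text{for }n\ge 1.
\]

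Next I would prove by strong induction on $n$ that $w_n\ge 0$, the base case $n=0$ being immediate. Assuming $w_0,\dots,w_{n-1}\ge 0$, I would expand the convolution in \eqref{eq:CMsch} and isolate the diagonal term $\omega_0 w_n$ (recall $\omega_0>0$), so that the displayed inequality becomes
\[
\omega_0 w_n+\delta_n w_0+\sum_{k=1}^{n-1}\omega_k w_{n-k}\ge -h^\alpha\bigl[f(y_n)-f(u_n)\bigr].
\]
Suppose, for contradiction, that $w_n<0$. Then $y_n<u_n$, so the monotonicity of $f$ forces $f(y_n)-f(u_n)\le 0$, whence the right-hand side is $\ge 0$. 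On the other hand, $\omega_0 w_n<0$ strictly; by \eqref{eq:coeff} one has $\delta_n=\sum_{k\ge n}\omega_k<0$, which together with $w_0\ge 0$ gives $\delta_n w_0\le 0$; and each $\omega_k<0$ for $1\le k\le n-1$ combined with the induction hypothesis $w_{n-k}\ge 0$ gives $\omega_k w_{n-k}\le 0$. Hence the left-hand side is strictly negative, contradicting the inequality. Therefore $w_n\ge 0$, closing the induction and giving $u_n\le y_n$ for all $n$.

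The entire argument rests on the sign pattern recorded in \eqref{eq:coeff}: the strict positivity of the diagonal weight $\omega_0$, the negativity of every off-diagonal weight $\omega_k$, and the resulting negativity of $\delta_n=\sum_{k\ge n}\omega_k$. The point that needs the most care — and which I regard as the crux — is that the Caputo-type operator is \emph{nonlocal}, so the sign of $w_n$ is not controlled by its immediate predecessor alone but by the whole history $w_0,\dots,w_{n-1}$. This is exactly why strong induction rather than a one-step recursion is required, and why the uniform negativity of all the weights $\omega_1,\dots,\omega_{n-1}$ is essential rather than incidental: it is what lets every term on the left be declared nonpositive simultaneously while the forcing term on the right is nonnegative. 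Once this sign bookkeeping is in place the contradiction is immediate, and the supersolution half $y_n\le v_n$ follows verbatim upon replacing $w_n$ by $v_n-y_n$ and reversing the relevant inequalities.
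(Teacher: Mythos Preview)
Your argument is correct. The paper does not actually prove this lemma; it is quoted from \cite{LiWang2019}, and the only commentary the paper offers is the remark that ``the key properties needed to establish this lemma are the sign pattern and monotonicity of the discrete coefficients described in (\ref{eq:coeff}(i))''. Your strong-induction contradiction argument is the natural way to supply the missing details and relies on precisely the sign structure the paper highlights: $\omega_0>0$, $\omega_k<0$ for $k\ge 1$, and hence $\delta_n<0$. In fact your proof shows that the \emph{monotonicity} of the $\omega_k$ is not needed here --- only their signs --- which is slightly sharper than what the paper's remark suggests.

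One small point of presentation: the identity $\mathcal{D}_h^\alpha(y_n)-\mathcal{D}_h^\alpha(u_n)=\mathcal{D}_h^\alpha(w_n)$ is correct because the operator in \eqref{eq:CMsch} is genuinely linear in the full sequence $(y_0,y_1,\dots,y_n)$, but it would be cleaner to say ``linear'' rather than ``affine with a cancelling constant part'', since there is nothing to cancel once one reads \eqref{eq:CMsch} as $\frac{1}{h^\alpha}\sum_{k=0}^{n}\omega_k(y_{n-k}-y_0)$.
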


In Lemma~\ref{lem:Liwang21} we say that $\{u_n\}$ is a \emph{discrete subsolution for~$\{y_n\}$} and  $\{v_n\}$ is  a \emph{discrete supersolution for~$\{y_n\}$}. 

 Lemma~\ref{lem:Liwang21} is a generalisation of the comparison principle for the L1 method that was used in~\cite{Kop20,liao2020energy}. Note that the key properties needed to establish this lemma are the sign pattern and monotonicity of the discrete coefficients described in (\ref{eq:coeff}(i)).
 
Next, observe that  the discretisation \eqref{eq:CMsch} can be rewritten as
\begin{equation}\label{CMalt}
\mathcal{D}_h^{\alpha} (v_{n}) = \frac{1}{h^{\alpha}}\left[-\delta_1 v_{n}+\delta_{n}v_{0} 
		+ \sum\limits_{k=1}^{n-1}(\delta_k - \delta_{k+1}) v_{n-k}\right]
	= \frac{1}{h^{\alpha}}\sum\limits_{k=1}^{n} \delta_k (v_{n-k} - v_{n-k+1}) \text{ for } n\geq1.
\end{equation}
This reformulation resembles --- more closely than~\eqref{eq:CMsch} --- the standard definition 
$\frac{1}{\Gamma(1-\alpha)}\int_{0}^{t} (t-s)^{-\alpha}y'(s)\,ds$ of the Caputo derivative.  

We can now prove one of our main results: the discrete analogue of Lemma~\ref{lem:non-decay}.

\begin{theorem}\label{thm:main}
For the model equation \eqref{FODE}, consider the time-stepping scheme  defined by 
\begin{equation} \label{eq:numsolu}
\begin{split}
\mathcal{D}_{h}^{\alpha}(y_{n}) =\frac{1}{h^\alpha} \left( \sum_{k=1}^{n}\omega_{n-k}y_{k} +\delta_{n} y_{0}\right)=-\lambda y_{n}^{\gamma} \ \text{ for }n\geq 1, \ \text{with } y_0>0.
\end{split}
\end{equation}
where $\lambda, \gamma>0$ and $\mathcal{D}_{h}^{\alpha}$ is $\mathcal{CM}$-preserving.
Then there exists a positive constant $h_0$ such that for any $h$ satisfying $0<h\le h_0$, on the mesh $\{t_n = nh\}_{n=0}^\infty$ the solution $\{y_n\}$ of \eqref{eq:numsolu} satisfies
\begin{equation} \label{eq:numdecay}
 \frac{C_{5}}{1+t_{n}^{\alpha/\gamma}}\leq y_{n}\leq \frac{C_{6}}{1+t_{n}^{\alpha/\gamma}} \text{ for } n =0,1,2,\dots
\end{equation}
where the positive constants $C_{5}, C_{6}$ are independent of $n$ and $h$.
\end{theorem}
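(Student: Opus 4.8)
The plan is to deduce the two-sided bound \eqref{eq:numdecay} from the discrete fractional comparison principle of Lemma~\ref{lem:Liwang21}, applied with the nondecreasing nonlinearity $f(y)=\lambda y^\gamma$. It then suffices to exhibit a discrete subsolution $\{u_n\}$ and a discrete supersolution $\{v_n\}$, each comparable to $(1+t_n^{\alpha/\gamma})^{-1}$, satisfying $\mathcal{D}_h^\alpha(u_n)+\lambda u_n^\gamma\le 0\le \mathcal{D}_h^\alpha(v_n)+\lambda v_n^\gamma$ for $n\ge1$ together with $u_0\le y_0\le v_0$; bounds of the form $u_n\ge C_5(1+t_n^{\alpha/\gamma})^{-1}$ and $v_n\le C_6(1+t_n^{\alpha/\gamma})^{-1}$ then give \eqref{eq:numdecay} (and in particular keep $y_n>0$, so that $y_n^\gamma$ is well defined). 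The essential difference from the continuous case is that every estimate must be uniform in both $n$ and $h$ for $0<h\le h_0$.

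For the construction I would use not a single scaling of $(1+t_n^{\alpha/\gamma})^{-1}$ but the two-parameter family $\phi_{c,\eta}(t):=c\,(\eta+t^{\alpha/\gamma})^{-1}$, sampled as $\phi_n:=\phi_{c,\eta}(t_n)$. The crucial feature is that $\phi_{c,\eta}$ has initial value $\phi_{c,\eta}(0)=c/\eta$ but far-field amplitude $\phi_{c,\eta}(t)\sim c\,t^{-\alpha/\gamma}$, so $c$ and $\eta$ tune the initial value and the asymptotic amplitude independently. This decoupling is essential, because (as the heuristic below indicates) the size of $\mathcal{D}_h^\alpha(\phi_n)$ is governed by the total decay $\phi_{c,\eta}(0)$ while the nonlinear term $\lambda\phi_n^\gamma$ is governed by $c$; a one-parameter ansatz cannot reconcile the initial-value constraint $\phi(0)\gtrless y_0$ with the required sign of $\mathcal{D}_h^\alpha(\phi_n)+\lambda\phi_n^\gamma$ for all $\gamma>0$, the case $\gamma<1$ being the awkward one. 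Since $\phi_{c,\eta}$ is positive, smooth, and decreasing, $c(\eta+t_n^{\alpha/\gamma})^{-1}$ is two-sidedly comparable to $C(1+t_n^{\alpha/\gamma})^{-1}$, so these refined bounds yield the clean bounds in \eqref{eq:numdecay}.

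The technical heart is a sharp two-sided estimate of $\mathcal{D}_h^\alpha(\phi_n)$, for which I would use the summed form \eqref{CMalt}, $\mathcal{D}_h^\alpha(\phi_n)=h^{-\alpha}\sum_{k=1}^{n}\delta_k(\phi_{n-k}-\phi_{n-k+1})$. Since $\{\phi_n\}$ is decreasing and each $\delta_k<0$, every summand is negative, and $|\delta_k|\asymp k^{-\alpha}$ by \eqref{eq:coeffdecay}. The decisive point, mirroring the continuous fact $\mathcal{D}_t^\alpha f(t)\sim -f(0)\,t^{-\alpha}/\Gamma(1-\alpha)$ for $f$ decaying from $f(0)$ to $0$, is that the dominant contribution comes from the indices $k$ near $n$ (the far past, where the telescoping increments sum to $\approx\phi_{c,\eta}(0)$ and $h^{-\alpha}|\delta_k|\asymp (hn)^{-\alpha}=t_n^{-\alpha}$), while the recent-past indices contribute only at the subdominant order $t_n^{-\alpha-\alpha/\gamma}$. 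This yields $|\mathcal{D}_h^\alpha(\phi_n)|\asymp \phi_{c,\eta}(0)\,t_n^{-\alpha}$ with constants traceable to $C_3,C_4$ and independent of $h$. As $\lambda\phi_n^\gamma\asymp\lambda c^\gamma t_n^{-\alpha}$ in the far field, I can then pick $(c,\eta)$ so that $\lambda\phi_n^\gamma$ dominates $|\mathcal{D}_h^\alpha(\phi_n)|$ (for $v_n$) or is dominated by it (for $u_n$), while forcing $c/\eta\ge y_0$ (respectively $\le y_0$); the near-field regime $t_n=O(1)$, where $\phi_n$ is nearly constant and $|\mathcal{D}_h^\alpha(\phi_n)|$ is small, is then absorbed by the same choice with room to spare.

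The main obstacle is exactly this uniform two-sided estimate $|\mathcal{D}_h^\alpha(\phi_n)|\asymp \phi_{c,\eta}(0)\,t_n^{-\alpha}$. Proving it requires splitting the convolution sum in \eqref{CMalt} according to whether the current-time index $n-k$ lies in the initial layer, the bulk, or the far past, bounding each piece against the weight decay \eqref{eq:coeffdecay} and the mean-value increments of $\phi_{c,\eta}$, and doing so with constants independent of $h$ for $0<h\le h_0$ — one cannot merely quote the continuous asymptotics, since the comparison must survive the factor $h^{-\alpha}$ and the weak singularity of $t\mapsto t^{\alpha/\gamma}$ at $t=0$. Arranging that a single pair $(C_5,C_6)$ works for every $n\ge0$, and that the tuned $(c,\eta)$ satisfy both the asymptotic inequality and the initial-value constraint for all $\gamma>0$, is where the delicate part of the argument lies.
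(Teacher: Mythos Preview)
Your overall strategy---discrete comparison principle plus explicit sub- and supersolutions decaying like $t_n^{-\alpha/\gamma}$---is the same as the paper's. The difference is the shape of the barriers: you propose the single smooth family $\phi_{c,\eta}(t)=c/(\eta+t^{\alpha/\gamma})$, whereas the paper uses piecewise profiles with a carefully chosen initial layer.

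There is a genuine gap in your subsolution when $0<\gamma<1$. Your two-sided claim $|\mathcal{D}_h^\alpha(\phi_n)|\asymp \phi_{c,\eta}(0)\,t_n^{-\alpha}$ cannot hold near $t=0$: at $n=1$,
\[
|\mathcal{D}_h^\alpha(\phi_1)|
=\frac{\omega_0}{h^\alpha}\,(\phi_0-\phi_1)
=\frac{\omega_0\,c\,h^{\alpha/\gamma-\alpha}}{\eta(\eta+h^{\alpha/\gamma})}
\le \frac{\omega_0 c}{\eta^2}\,h^{\alpha(1/\gamma-1)}\longrightarrow 0
\quad\text{as }h\to 0,
\]
while $\lambda\phi_1^\gamma\to\lambda(c/\eta)^\gamma>0$. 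Hence $\mathcal{D}_h^\alpha(\phi_1)+\lambda\phi_1^\gamma>0$ for all sufficiently small $h$, and no $h$-independent choice of $(c,\eta)$ can make $\{\phi_n\}$ a subsolution. The underlying reason is that $\phi_{c,\eta}(t)-\phi_{c,\eta}(0)\sim -(c/\eta^2)\,t^{\alpha/\gamma}$ near $t=0$, and $\mathcal{D}_t^\alpha(t^{\alpha/\gamma})=\frac{\Gamma(1+\alpha/\gamma)}{\Gamma(1+\alpha/\gamma-\alpha)}\,t^{\alpha/\gamma-\alpha}\to 0$ when $\gamma<1$: the profile is too flat at the origin for its (discrete) Caputo derivative to dominate the nonlinearity there. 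Your sentence ``the near-field regime \ldots\ is then absorbed by the same choice with room to spare'' is exactly where the argument fails.

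The paper's remedy is to make the subsolution piecewise: on an initial layer $0\le n\le n_0$ it sets $u_n=y_0-\mu\,t_n^\alpha/\Gamma(1+\alpha)$, switching to the power tail $C_7\,t_n^{-\alpha/\gamma}$ only for $n>n_0$. The $t^\alpha$ shape is chosen because $\mathcal{D}_t^\alpha\bigl(t^\alpha/\Gamma(1+\alpha)\bigr)=1$; its discrete analogue (obtained from \eqref{eq:coeffdecay}) yields $\mathcal{D}_h^\alpha(u_n)\le -\lambda y_0^\gamma$ on the layer, uniformly in $h$, which is precisely what the subsolution inequality needs. This $t^\alpha$ initial profile---not an extra tuning parameter in a smooth ansatz---is the missing idea. (For the supersolution your approach is closer to workable, since there only an \emph{upper} bound on $|\mathcal{D}_h^\alpha(\phi_n)|$ is required; the paper nonetheless uses a piecewise constant-then-power profile and splits the tail into the ranges $n_1<n\le 2n_1$ and $n>2n_1$ to obtain clean constants.)
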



Before proving this theorem, we give a heuristic motivation for our construction of the discrete subsolution $\{u_{n}\}$ and supersolution $\{v_{n}\}$. For the numerical solution $\{y_{n}\}$, Theorem \ref{thm:main} requires the long-term decay rate 
\tcm{$y_{n}\sim t_{n}^{-\alpha/\gamma}$} to be consistent with the true solution (recall Lemma~\ref{lem:non-decay}), so we need \tcm{$u_{n}\sim  t_{n}^{-\alpha/\gamma}$ and $v_{n} \sim t_{n}^{-\alpha/\gamma}$ as $n\to\infty$.} We still have flexibility in choosing suitable constant multiplicative factors in $u_{n}$ and $v_{n}$ when $n$ is large, and in specifying suitable values of $u_{n}$ and $v_{n}$ when $n$ is relatively small, while keeping the sequences $\{u_{n}\}$ and $\{v_{n}\}$ monotonic decreasing and positive.  

\begin{proof}
Since $\mathcal{D}_{h}^{\alpha} (\cdot)$ is a $\mathcal{CM}$-preserving approximation, its weights $\{\omega_{j}\}_{j=0}^{\infty}$ satisfy \eqref{eq:coeff}  and~\eqref{eq:coeffdecay}. We shall construct a discrete subsolution $\{u_n\}$ and a discrete supersolution $\{v_n\}$  for $\{y_n\}$; the inequalities  \eqref{eq:numdecay} will then follow from Lemma~\ref{lem:Liwang21}. 

Define $h_0>0$ by 
\begin{equation}\label{eq:h0}
h_0 =  \frac12 \left(\frac{C_3 y_0^{1-\gamma}}{2\lambda}\right)^{1/\alpha}\,, 
\end{equation}
where the positive constant $C_3$ comes from~\eqref{eq:coeffdecay}. Assume throughout the proof that $0<h\le h_0$.

The subsolution $\{u_n\}$ is constructed first. 
Set $g_{n}= t_{n}^{\alpha}/\Gamma(1+\alpha)$ for $n\ge 0$. 
Define the sequence 
 \begin{equation}\label{eq:subsolu}
u_n= \begin{cases}
y_{0}-\mu g_{n} &\text{for } n=0,1, \dots, n_0,\\
C_7 t_{n}^{-\alpha/\gamma} &\text{for }n\ge n_0+1,
\end{cases}
\end{equation}
where the positive parameters $\mu,  n_0, C_7$ are specified by
\begin{equation}\label{eq:subcond}
\begin{split}
\mu:=\lambda y_{0}^{\gamma} \cdot \frac{\Gamma(1+\alpha)} {C_{3}}, \quad
t_{n_0} := \max\left\{ t_n: t_n^\alpha \le \frac{C_3 y_0^{1-\gamma}}{2 \lambda}\right\}, \quad
C_7 := \frac{ y_0 t_{n_{0}}^{\alpha/\gamma}} {2}\,.
\end{split}
\end{equation}
Note that $h\le h_0$ and \eqref{eq:h0} imply that $n_0\ge 2$ and $t_{n_0}\ge h_0$, so 
\begin{equation}\label{C7}
C_7\ge \frac{y_0}{2}\cdot \frac1{2^{\alpha/\gamma}} \left(\frac{C_3 y_0^{1-\gamma}}{2\lambda}\right)^{1/\gamma}
	 =  \frac1{2^{1+\alpha/\gamma}} \left(\frac{y_0C_3}{2\lambda}\right)^{1/\gamma},
\end{equation}
i.e., $C_7$ (which depends on the mesh) is bounded  below by a fixed positive constant.
From the definitions \eqref{eq:subcond} we get 
\[
 \frac{\mu t_{n_0}^{\alpha}} {\Gamma(1+\alpha)}+ C_7 t_{n_{0}+1}^{-\frac{\alpha}{\gamma}} \le \frac{y_0}{2} +  \frac{y_0}{2}  = y_0,
\]
which is equivalent to $u_{n_{0}}\geq u_{n_{0}+1}$. It then follows easily from~\eqref{eq:subsolu} that the sequence $\{u_{n}\}_{n=0}^{\infty}$ is monotonically decreasing and strictly positive. 

We now show that $\{u_{n}\}$ is a subsolution to $\{y_{n}\}$, considering separately the cases $1\leq n\leq n_{0}$ and $n>n_0$. 

\textbf{Case I:} $1\leq n\leq n_{0}$.
It follows from~\eqref{eq:subsolu} that 
\begin{equation*} 
\begin{split}
\mathcal{D}_h^{\alpha} (u_{n})&=\frac{1}{h^\alpha} \left( \sum_{k=1}^{n}\omega_{n-k}u_{k} +\delta_{n} u_{0}\right)\\
&=\frac{1}{h^\alpha} \left( \left(\sum_{k=1}^{n}\omega_{n-k} +\delta_{n}\right) u_{0}- \mu \sum_{k=1}^{n}\omega_{n-k}g_{k}\right)
=- \frac{\mu}{h^\alpha} \left( \sum_{k=1}^{n}\omega_{n-k}g_{k}\right),
\end{split}
\end{equation*}
since $\sum_{k=1}^{n}\omega_{n-k} +\delta_{n}=0$.
As the scheme is $\mathcal{CM}$-preserving, from \eqref{eq:coeff} and~\eqref{eq:coeffdecay} we have $\omega_0>0$, $\omega_k<0$ for $k\geq 1$
and $\omega_{0}=-\sum_{k=1}^{\infty}\omega_{k}$, which imply that  
\[
\sum_{k=1}^{n}\omega_{n-k}g_{k}=\sum_{k=1}^{n-1}\omega_{k}(g_{n-k}-g_{n})+ \left(-\sum_{k=n}^{\infty}\omega_{k}\right) g_{n}\geq  \left(-\sum_{k=n}^{\infty}\omega_{k}\right) g_{n}\geq \frac{C_{3}}{n^\alpha}g_{n}.
\]
Hence, recalling the definitions of $\mu$ and $g_n$, we get
\begin{equation} \label{1nn0}
\mathcal{D}_h^{\alpha} (u_{n}) \leq - \frac{\mu}{h^\alpha} \cdot \frac{C_{3}}{n^\alpha}\,g_{n} 
\leq -\lambda y_{0}^{\gamma} \leq - \lambda (u_{0}-\mu g_{n})^{\gamma} 
=  - \lambda u_n^{\gamma}  \ \text{ for } 1\le n\le n_0.
\end{equation}

\textbf{Case II:} $n\geq n_{0}+1$. 
Set 
$I_{1}:=\sum_{k=1}^{n_{0}}\omega_{n-k}u_{k} +\delta_{n} u_{0}$ and $I_{2}:=\sum_{k=n_{0}+1}^{n}\omega_{n-k}u_{k}.$
Then
\begin{equation*} 
\begin{split}
I_{1}=\left(\delta_{n}+ \sum\limits_{k=1}^{n_{0}}\omega_{n-k}\right) u_{0}-\mu \sum\limits_{k=1}^{n_{0}}\omega_{n-k} g_{k}
=-\left(\sum\limits_{k=0}^{n-n_{0}-1}\omega_{k}\right) u_{0}-\mu \sum\limits_{k=1}^{n_{0}}\omega_{n-k} g_{k}.
\end{split}
\end{equation*}
Hence, recalling that $\omega_k<0$ for $k\ge 1$, we have
\begin{align*}
I_{1}+I_{2} &= \omega_{0}(u_{n}-u_{0}) + \sum_{k=1}^{n-n_{0}-1} |\omega_{k}| (u_{0}-u_{n-k}) 
	+\mu \sum_{k=1}^{n_{0}} |\omega_{n-k}| g_{k} \notag\\
&=\omega_{0}(u_{n}-u_{n_{0}}) + \sum_{k=1}^{n-n_{0}-1} |\omega_{k}| (u_{n_{0}}-u_{n-k} )+\mu \sum_{k=1}^{n_{0}} |\omega_{n-k}| g_{k}+\mu g_{n_{0}} \left( \sum_{k=1}^{n-n_{0}-1} |\omega_{k}| -\omega_{0} \right),  
\end{align*}
using the identities $u_{0}= u_{n_{0}}-u_{n_{0}}+ u_{0} = u_{n_{0}}+\mu g_{n_{0}}$
and $u_{0}-u_{n-k}=u_{0}-u_{n_{0}}+u_{n_{0}}- u_{n-k}=\mu g_{n_{0}}+u_{n_{0}}-u_{n-k}$. 
Here the first two terms are nonpositive because $\omega_0>0$ and $\{u_n\}$ is monotonically decreasing, so 
 \begin{align*} 
I_{1}+I_{2} &\leq\mu \sum_{k=1}^{n_{0}} |\omega_{n-k}| g_{k}+\mu g_{n_{0}} \sum_{k=n-n_{0}}^{\infty} \omega_{k}\\
	&\leq \mu g_{n_{0}}\sum_{k=1}^{n_{0}} |\omega_{n-k}|+\mu g_{n_{0}}  \sum_{k=n-n_{0}}^{\infty} \omega_{k} 
	 = \mu g_{n_{0}} \sum_{k=n}^{\infty} \omega_{k} . 
\end{align*}
According to \eqref{eq:coeffdecay} and the definitions of~$\mu$ and $g_{n_0}$, we get that 
$I_{1}+I_{2}\leq - \mu g_{n_{0}} \frac{ C_{3}} { n^{\alpha}}=- \frac{\lambda y_{0}^{\gamma} t_{n_{0}}^{\alpha}} {n^{\alpha}}$.
Thus
\begin{equation}\label{n0lessthann}
\mathcal{D}_h^{\alpha} (u_{n}) = \frac1{h^\alpha}(I_{1}+I_{2})
	\le - \frac{\lambda y_{0}^{\gamma} t_{n_{0}}^{\alpha}} {h^\alpha n^{\alpha}}
	=  - \lambda y_{0}^{\gamma} t_{n_{0}}^{\alpha} t_n^{-\alpha}
	\le -\lambda u_n^\gamma  \ \text{ for } n> n_0.
\end{equation}
from the definitions of $u_n$ and $C_7$.

The inequalities \eqref{1nn0} and~\eqref{n0lessthann}, together with $u_0 = y_0$,  
show that  $\{u_n\}$  is a discrete subsolution for~$\{y_n\}$.\\

We now construct a discrete supersolution $\{v_n\}$ for $\{y_n\}$. 
Define  the mesh point 
\begin{equation}\label{tN}
t_{n_1} := \min \left\{ t_n:  t_{n}^\alpha \ge \frac{y_0^{1-\gamma}}{\lambda}\max\left\{  \frac{\alpha 2^\alpha C_4 }{\gamma(1-\alpha)}\,,
	C_8 \right\} \right\},
\end{equation}
where, recalling the positive constant $C_4$ in~\eqref{eq:coeffdecay}, we define
$
C_8 :=  2^\alpha C_4 \left[ \frac{\alpha 2^{\alpha/\gamma} }{\gamma(1-\alpha)} + 1 \right].
$
Then define the function
\begin{equation}\label{w}
v(t) := 
	\begin{cases}
	y_0  &\text{for } 0\le t\le t_{n_1}, \\
	C_9 t^{-\alpha/\gamma} &\text{for } t_{n_1} < t < \infty,
	\end{cases}
\ \text{with }\ C_9:=  y_0 t_{n_1}^{\alpha/\gamma}.
\end{equation}
The choice of $C_9$ ensures that $v\in C[0,\infty)$. Clearly $v$ is positive and monotonically decreasing on $[0,\infty)$. Set $v_n := v(t_n)$ for $n=1,2,\dots$

From \eqref{eq:CMsch} we have 
\begin{equation}\label{w11}
\mathcal{D}_h^{\alpha}(v_n)  =0 > -\lambda v_n^\gamma \ \text{ for } n=1,2,\dots, {n_1}.
\end{equation}

Next, suppose that ${n_1} <n \le 2{n_1}$. 
Recalling~\eqref{CMalt} and the definition of $v(t)$, since $\delta_k <0$ for $k\ge 1$ one gets
\[
\mathcal{D}_h^{\alpha} (v_{n}) =  h^{-\alpha}\sum_{k=1}^{n-{n_1}} \delta_k (v_{n-k} - v_{n-k+1}) \notag\\
	\ge h^{-\alpha}(v_{n_1} - v_{{n_1}+1}) \sum_{k=1}^{n-{n_1}} \delta_k;
\]
by  \eqref{eq:coeffdecay} and $n\le 2{n_1}$ one has
\[
\sum_{k=1}^{n-{n_1}} |\delta_k| \le \sum_{k=1}^{n-{n_1}} C_4 k^{-\alpha} \le   C_4 \int_{s=0}^{n-{n_1}}s^{-\alpha}ds 
	= \frac{C_4 (n-{n_1})^{1-\alpha}}{1-\alpha} \le \frac{C_4 {n_1}^{1-\alpha}}{1-\alpha},
\]
so 
\begin{equation}\label{discsup1}
\mathcal{D}_h^{\alpha} (v_{n}) \ge  - \frac{C_4 }{1-\alpha}h^{-\alpha}(v_{n_1} - v_{{n_1}+1}) {n_1}^{1-\alpha}.
\end{equation}
But $v'<0$ and $v''>0$ on $(t_{n_1}, t_{{n_1}+1})$ imply that
$
0 < v_{n_1} - v_{{n_1}+1}\le -hv'(t_{n_1}) = h\frac{C_9\alpha}{\gamma}t_{n_1}^{-1-\alpha/\gamma} 
	= hy_0 \frac{\alpha}{\gamma}t_{n_1}^{-1} 
$
by the definition of $C_9$. Hence \eqref{discsup1} yields
\begin{equation}\label{discsup2}
\mathcal{D}_h^{\alpha} (v_n)   \ge - \frac{y_0 \alpha C_4 }{\gamma(1-\alpha)} t_{n_1}^{-\alpha}
	\ge - \frac{y_0 \alpha 2^\alpha C_4 }{\gamma(1-\alpha)} t_n^{-\alpha}\ \text{ since }t_n \le 2t_{n_1}.
\end{equation}
Now the definitions of $t_{n_1}$ and $C_9$ give
$ \frac{\alpha 2^\alpha C_4 }{\gamma(1-\alpha)} \le \lambda y_0^{\gamma-1}t_{n_1}^\alpha
 	= \lambda C_9^\gamma y_0^{-1}$,
so from \eqref{discsup2} we get the desired inequality
\begin{equation}\label{discsup3}
\mathcal{D}_h^{\alpha} (v_n)   \ge -  \lambda C_9^\gamma  t_n^{-\alpha} 
		= - \lambda v_n^\gamma \ \text{ for } {n_1}<n\le 2{n_1}. 
\end{equation}

Finally, suppose that  $n > 2{n_1}$. Set $n' := \lceil n/2\rceil$, so $n' >{n_1}$ and $n'\le (n+1)/2$.
Using \eqref{CMalt}, the definition of $v(t)$ and \eqref{eq:coeffdecay}, one has
\begin{align}
\mathcal{D}_h^{\alpha} (v_{n}) &=  h^{-\alpha}\sum_{k=1}^{n-{n_1}} \delta_k (v_{n-k} - v_{n-k+1}) \notag\\
	&\ge -h^{-\alpha}\left(\sum_{k=1}^{n-n'} +  \sum_{k=n-n'+1}^{n-{n_1}} \right) C_4 k^{-\alpha}(v_{n-k} - v_{n-k+1})   \notag\\
	&\ge -C_4 h^{-\alpha}\left[ (v_{n'} - v_{n'+1}) \sum_{k=1}^{n-n'} k^{-\alpha}
		+  (n-n'+1)^{-\alpha} \sum_{k=n-n'+1}^{n-{n_1}} (v_{n-k} - v_{n-k+1})\right]   \notag\\
	&\ge -C_4 h^{-\alpha}\left[ h|v'(t_{n'})| \int_{s=0}^{n-n'} s^{-\alpha}ds 
		+  \left(\frac{n+1}{2} \right)^{-\alpha} (v_{n_1} - v_{n'})\right].   \label{discsup4}
\end{align}
But the definitions of $v(t)$ and $C_9$, and $n'\ge n/2$, give
\[
|v'(t_{n'})|  = \frac{C_9\alpha}{\gamma}t_{n'}^{-1-\alpha/\gamma} 
	\le y_0 t_{n_1}^{\alpha/\gamma}  \cdot \frac{\alpha}{\gamma}\left( \frac{t_n}{2} \right)^{-1-\alpha/\gamma} 
	< y_0 \frac{\alpha}{\gamma} \,2^{1+\alpha/\gamma}  t_n^{-1} \ \text{ as }t_{n_1} < t_n.
\]
Substituting this inequality into~\eqref{discsup4}, recalling that  $v_{n_1}=y_0$ and discarding the final term $v_{n'}$, we get
\begin{align}
\mathcal{D}_h^{\alpha} (v_{n}) &\ge -C_4  y_0 h^{-\alpha} \left[ \frac{\alpha}{\gamma} \,2^{1+\alpha/\gamma} 
		h t_n^{-1} \frac{(n-n')^{1-\alpha}}{1-\alpha} + 2^\alpha n^{-\alpha} \right].  \notag\\
	&\ge -C_4  y_0 2^\alpha t_n^{-\alpha} \left[ \frac{\alpha 2^{\alpha/\gamma} }{\gamma(1-\alpha)} + 1 \right],  
		 \label{discsup5}
\end{align}
where we used  $ht_n^{-1} = n^{-1},\, n-n'\le n/2$ and $h^{-\alpha} n^{-\alpha}  = t_n^{-\alpha}$. The definitions of $C_8, t_{n_1}, C_9$ and $v(t)$ enable us to deduce from \eqref{discsup5} that 
\begin{equation}\label{discsup6}
\mathcal{D}_h^{\alpha} (v_{n}) 	\ge -C_8  y_0  t_n^{-\alpha}  \ge -\lambda y_0^\gamma t_{n_1}^{\alpha} t_n^{-\alpha}
	= -\lambda C_9^\gamma  t_n^{-\alpha}  = -\lambda v_n^\gamma \ \text{ for } n >2{n_1}.	 
\end{equation}

Combining \eqref{w11}, \eqref{discsup3} and \eqref{discsup6} gives $\mathcal{D}_h^{\alpha} (v_n) \ge - \lambda v_n^\gamma $ for all $n\ge 1$; as $v_0 = y_0$, we have shown that $\{v_n\}$ is a supersolution for $\{y_n\}$.

It is evident that $u_n = C_7 t_{n}^{-\alpha/\gamma}$ and $v_n \le y_0(1+t_n^{-\alpha/\gamma})$ for $t_n\ge \max\{t_{n_0}, t_{n_1}\}$ , so the desired bounds \eqref{eq:numdecay}  now follow immediately from Lemma~\ref{lem:Liwang21} on recalling \eqref{C7}.  
\end{proof}

\begin{remark}
The discrete subsolution and discrete supersolution constructed in the proof of Theorem~\ref{thm:main} are discrete analogues of  
the subsolution and supersolution for the continuous problem \eqref{FODE}  in \cite[Section~7]{vergara2015optimal}. This agreement is unsurprising since these functions  are natural choices to act as sub- and supersolutions.  Note however that  the discrete analysis above is more complicated than the continuous analysis  in \cite[Section~7]{vergara2015optimal}. 
\end{remark}

\begin{remark}
In the linear case $\gamma=1$,  the long-time numerical decay rate $y_{n}=O(t_n^{-\alpha})$ was proved in~\cite{wang2021mittag} via a singularity analysis of the generating function. 
Theorem~\ref{thm:main} now gives an alternative proof of this result based on discrete upper and lower solutions and the fractional comparison principle.
\end{remark}

\section{The L1 scheme on general meshes}\label{sec:L1}
Theorem~\ref{thm:main} is applicable to all $\mathcal{CM}$-preserving schemes on uniform meshes. In the present section we consider a particular $\mathcal{CM}$-preserving scheme ---  the L1 scheme --- and extend Theorem~\ref{thm:main} to this scheme on very general meshes. 
 
 Working on (possibly) nonuniform meshes implies that the bounds \eqref{eq:coeffdecay} can no longer be assumed, but on the other hand the L1 scheme has the special property \eqref{L1defn} which will be used several times in our analysis.

Let the mesh $0=t_0 <t_1 <t_2 <\dots$ be arbitrary. Set $\tau_n = t_n-t_{n-1}$ for $n\ge 1$. Let $y_n$ denote the computed solution at each mesh point $t_n$ for $n\ge 1$. Then the L1 discretisation~$\mathcal{D}_{L1}^{\alpha}$ of $\mathcal{D}_t^{\alpha}$ can be written as (see for example \cite{StyL1survey})
\begin{equation}\label{eq:L1a}
\mathcal{D}_{L1}^{\alpha} (y_n) = \frac1{\Gamma(2-\alpha)} \left[ d_{n,1} y_n - d_{n,n}y_0 
	+ \sum^{n-1}_{k=1}y_{n-k} \left(d_{n,k+1}-d_{n,k}\right)  \right] ,
\end{equation}
where 
\[  
d_{n,k} :=\frac{(t_n-t_{n-k})^{1-\alpha}-(t_n-t_{n-k+1})^{1-\alpha}}{\tau_{n-k+1}}
	= \frac{1-\alpha}{\tau_{n-k+1}} \int_{s=t_{n-k}}^{t_{n-k+1}} (t_n-s)^{-\alpha} ds.
\]
Clearly $d_{n,k}$ is $1-\alpha$ times the mean value of the function $s\mapsto (t_n-s)^{-\alpha}$ on the interval $[t_{n-k}, t_{n-k+1}]$; as this function is positive and increasing, it follows that $0 < d_{n,k+1} < d_{n,k}$ for all $k$ and $n$.

Thus, our computed discrete solution $\{y_n\}_{n=0}^\infty$ of~\eqref{FODE} is defined by
\begin{equation}\label{L1scheme}
\mathcal{D}_{L1}^{\alpha} (y_n) = -\lambda y_n^\gamma\ \text{ for }n=1,2,\dots, \ \text{ with }y_0 \text{ given in }\eqref{FODE}.
\end{equation}
We aim to derive a decay result for $\{y_n\}$ that is similar to Theorem~\ref{thm:main}.

Our analysis will use a defining property of the L1 discretisation: if $\phi\in W^{1,1}_{loc}(\mathbb{R}^+)$ and $\phi_n := \phi(t_n)$ for all~$n$, then 
\begin{equation}\label{L1defn}
\mathcal{D}_{L1}^{\alpha}(\phi_n) = \mathcal{D}_t^{\alpha} \phi_I(t_n)\ \text{ for } n=1,2,\dots, 
\end{equation}
where $\phi_I$ is the piecewise linear interpolant to $\phi$ on the mesh, 
i.e., $\phi_I(t)=\frac{t-t_{n-1}}{t_{n}-t_{n-1}} \phi_n+ \frac{t_{n}-t}{t_{n}-t_{n-1}} \phi_{n-1}$ for $t\in [t_{n-1}, t_{n}]$.

Note that Lemma~\ref{lem:Liwang21} remains valid when $\mathcal{D}_h^{\alpha}$ on a uniform mesh is replaced by $\mathcal{D}_{L1}^{\alpha}$ on an arbitrary mesh, since its proof requires only the scheme properties (in the L1 scheme notation) $0 < d_{n,k+1} < d_{n,k}$ for all~$k$ and~$n$, and $\mathcal{D}_{L1}^{\alpha}(z_n)=0$ for any constant sequence~$\{z_n\}$. We shall use this lemma in our analysis.

The construction of the subsolution and supersolution used in Section~\ref{sec:L1} is related to  Section~\ref{sec:discFODE}, but there are some significant differences because \eqref{eq:coeffdecay}  is no longer available. 

\subsection{Discrete subsolution}\label{sec:discsub}
Define
\begin{equation}\label{mu}
\mu := \lambda y_0^\gamma \max\left\{1, 2\left( \frac{3}{4} \right)^\gamma \Gamma(1+\alpha) \Gamma(1-\alpha) \right\}.
\end{equation}
Here $ \Gamma(1+\alpha) \Gamma(1-\alpha) = \alpha  \Gamma(\alpha) \Gamma(1-\alpha) = \alpha\pi/\sin(\alpha\pi) >1$ by \cite[Theorem D.3]{Diethelm2010}.
Assume that  some mesh point $\hat t$  satisfies 
\begin{equation}\label{hatt}
\left( \frac{y_0\Gamma(1+\alpha)}{4\mu} \right)^{1/\alpha}  \le \hat t \le \left( \frac{y_0\Gamma(1+\alpha)}{2\mu} \right)^{1/\alpha}.
\end{equation}
(One can ensure that \eqref{hatt} is satisfied by sufficient mesh refinement near $t^\alpha = y_0\Gamma(1+\alpha)/(2\mu)$; this is not restrictive.)
For any constant $\beta>0$, define $g_\beta(s) = s^{\beta-1}/\Gamma(\beta)$ for $s\in (0,\infty)$, (if $\beta>1$ we also define $g_\beta(0)=0$).
Then define the function
\begin{equation}\label{v}
v(t) := 
	\begin{cases}
	y_0 - \mu g_{1+\alpha}(t) &\text{for } 0\le t\le \hat t, \\
	C_1^{\prime} t^{-\alpha/\gamma} &\text{for } \hat t < t < \infty,
	\end{cases}
\ \text{with }\ C_1^{\prime}:=  \hat t^{\alpha/\gamma} \left( y_0 - \mu g_{1+\alpha}(\hat t)\right).
\end{equation}
The choice of $C_1^{\prime}$ ensures that $v\in C[0,\infty)$. 
Note that $v(\hat t) = y_0 - \mu g_{1+\alpha}\left(\hat t \right)$ and \eqref{hatt} imply that 
\begin{equation}\label{vthat}
\frac{y_0}{2} \le v(\hat t) \le \frac{3y_0}{4}\,;
\end{equation}  
it follows that $v(t)>0$ for $t\in [0,\infty)$ and $C_1' \ge \frac{y_0}{2}\left( \frac{y_0\Gamma(1+\alpha)}{4\mu} \right)^{1/\gamma}$. Furthermore, clearly $v$ is a strictly decreasing function.

Set $v_n = v(t_n)$ for $n=0,1,\dots$ For $0< t_k \le t_n \le \hat t$, by Chebyshev's integral inequality \cite[Theorem 43]{HLP88},  
since $s\mapsto (t_n-s)^{-\alpha}$ is increasing while $s\mapsto g_{1+\alpha}'(s)$ is decreasing, one has
\begin{align*}
\int_{s=t_{k-1}}^{t_k} (t_n-s)^{-\alpha} g_{1+\alpha}'(s)\,ds 
	&\le \frac1{t_k - t_{k-1}} \left( \int_{s=t_{k-1}}^{t_k} (t_n-s)^{-\alpha} \,ds \right)
		\left( \int_{s=t_{k-1}}^{t_k}  g_{1+\alpha}'(s)\,ds \right) \\
	&= \frac{g_{1+\alpha}(t_k) - g_{1+\alpha}(t_{k-1})}{t_k - t_{k-1}}  \int_{s=t_{k-1}}^{t_k} (t_n-s)^{-\alpha} \,ds \\
	&=  \int_{s=t_{k-1}}^{t_k} (t_n-s)^{-\alpha}  (g_{1+\alpha})_I'(s) \,ds.
\end{align*}
Hence, recalling \eqref{L1defn}, we get
\begin{align*}
\mathcal{D}_{L1}^{\alpha} g_{1+\alpha}(t_n) &= \mathcal{D}_t^{\alpha} (g_{1+\alpha})_I(t_n) 
		= \frac1{\Gamma(1-\alpha)}\int_{s=0}^{t_n} (t_n-s)^{-\alpha} (g_{1+\alpha})_I'(s)\,ds \\
	&\ge \frac1{\Gamma(1-\alpha)}\int_{s=0}^{t_n} (t_n-s)^{-\alpha} g_{1+\alpha}'(s)\,ds	 
	= \mathcal{D}_t^{\alpha} g^{1+\alpha}(t_n) 
	= 1
\end{align*}
from \cite[p.193]{Diethelm2010}. Consequently
\begin{equation}\label{vtn1}
\mathcal{D}_{L1}^{\alpha} (v_n) \le -\mu \le -\lambda y_0^\gamma \le -\lambda v_n^\gamma\ \text{ for } 0<t_n<\hat t
\end{equation}
by \eqref{mu} and the definition of~$v$.

Now suppose that $t_n > \hat t$. Then 
$\mathcal{D}_{L1}^{\alpha} (v_n) = \mathcal{D}_t^{\alpha} v_I(t_n) = \frac1{\Gamma(1-\alpha)}\int_{s=0}^{t_n} (t_n-s)^{-\alpha} v_I'(s)\,ds$, and  since $v$ is decreasing one has 
\[
\int_{s=0}^{t_n} (t_n-s)^{-\alpha} v_I'(s)\,ds \le \int_{s=0}^{\hat t} (t_n-s)^{-\alpha} v_I'(s)\,ds  
	\le t_n^{-\alpha} \int_{s=0}^{\hat t}  v_I'(s)\,ds 
	= t_n^{-\alpha} \left[v(\hat t)-v(0)\right]  	\le -\frac{t_n^{-\alpha}y_0}{4}
\]
by \eqref{vthat}. Hence for $t_n > \hat t$ we get
\begin{align}
\mathcal{D}_{L1}^{\alpha} v_n + \lambda v_n^\gamma 
	&\le t_n^{-\alpha} \left[  -\frac{y_0}{4\Gamma(1-\alpha)} + \lambda C_1^\gamma \right] \notag\\
	&= t_n^{-\alpha} \left[  -\frac{y_0}{4\Gamma(1-\alpha)} + \lambda \hat t^\alpha \left( \frac{3y_0}{4} \right)^\gamma \right] \notag\\
	&\le t_n^{-\alpha} \left[  -\frac{y_0}{4\Gamma(1-\alpha)} 
		+ \lambda \frac{y_0\Gamma(1+\alpha)}{2\mu}\left( \frac{3y_0}{4} \right)^\gamma \right] \notag\\
	&\le 0,  \label{vtn2}
\end{align}
where we used \eqref{v}, \eqref{vthat} and finally \eqref{hatt} and the definition \eqref{mu} of~$\mu$.

From \eqref{vtn1} and \eqref{vtn2} we see that $\mathcal{D}_{L1}^{\alpha} v_n \le - \lambda v_n^\gamma $ for all $t_n>0$; as $v_0 = y_0$, we have shown that $\{v_n\}$ is a subsolution for $\{y_n\}$.

\subsection{Discrete supersolution}\label{sec:discsuper}
In this section, unlike Section~\ref{sec:discsub} where the mesh was arbitrary apart from the requirement~\eqref{hatt}, we  impose the following mild condition.
\begin{assumption}\label{ass:meshK}
Assume that the mesh $0=t_0 <t_1 <t_2 < \dots$satisfies the condition 
\[
\max_{n\ge 2} \left\{ \frac{\tau_n}{\tau_{n-1}}\,,  \frac{\tau_{n-1}}{\tau_{n}} \right\} \le K
	\  \text{for some fixed constant }K\ge 1.
\]
\end{assumption}

This assumption implies that for each $n\ge 2$, some mesh point is a crude approximation of  $t_n/2$, in the sense of the following lemma. 

\begin{lemma}\label{lem:meshK}
For each~$t_n$ with $n\ge 2$, there is a mesh point $t_{n^*}$ with the property 
\[  
 \frac{t_n}{K+2} \le t_{n^*}  \le  \frac{(K+1)t_n}{K+2}. 
\]  
\end{lemma}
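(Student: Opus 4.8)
The plan is to locate $t_{n^*}$ by a \emph{last-crossing} argument: define $n^*$ to be the largest index for which $t_{n^*} \le (K+1)t_n/(K+2)$. Since $t_0 = 0$ satisfies this bound, $n^*$ is well defined, and since $(K+1)/(K+2) < 1$ forces $t_{n^*} < t_n$, we automatically have $n^* < n$. The upper bound $t_{n^*} \le (K+1)t_n/(K+2)$ is then built into the definition, so the entire task reduces to proving the lower bound $t_{n^*} \ge t_n/(K+2)$.

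First I would verify that $n^* \ge 1$, i.e.\ that $n^* = 0$ cannot occur; this is the one edge case that must be excluded, since $t_0 = 0$ would violate the desired lower bound. Using $n\ge 2$ and Assumption~\ref{ass:meshK} in the form $\tau_2 \ge \tau_1/K$, I would estimate
\[
t_n \ge \tau_1 + \tau_2 \ge \tau_1\Bigl(1 + \tfrac1K\Bigr) = \tau_1\,\frac{K+1}{K},
\]
so that $t_1 = \tau_1 \le \frac{K}{K+1}\,t_n$. The elementary inequality $\frac{K}{K+1} \le \frac{K+1}{K+2}$ (equivalent to $0\le 1$ after cross-multiplication) then gives $t_1 \le \frac{K+1}{K+2}\,t_n$, which shows that the index $1$ already satisfies the defining constraint, whence $n^* \ge 1$.

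With $n^* \ge 1$ secured, the second step exploits maximality of $n^*$, which says $t_{n^*+1} > (K+1)t_n/(K+2)$, together with a one-step size bound. Since $n^* \ge 1$, Assumption~\ref{ass:meshK} (applied at index $n^*+1$) gives $\tau_{n^*+1} \le K\tau_{n^*}$, and the trivial bound $\tau_{n^*} \le t_{n^*}$ then yields
\[
t_{n^*+1} = t_{n^*} + \tau_{n^*+1} \le t_{n^*} + K\,t_{n^*} = (K+1)\,t_{n^*}.
\]
Combining this with the maximality inequality gives $(K+1)t_{n^*} \ge t_{n^*+1} > (K+1)t_n/(K+2)$, hence $t_{n^*} > t_n/(K+2)$, completing the lower bound and the proof.

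I expect no serious obstacle here, since the whole argument is a short bookkeeping exercise in the consecutive-ratio bound; the only point requiring genuine care is the exclusion of $n^* = 0$, where one must produce the sharp constant $\frac{K}{K+1}$ and check that it respects the target threshold $\frac{K+1}{K+2}$. Everything else is the routine telescoping estimate $\tau_{n^*} \le t_{n^*}$ combined with a single application of the step-ratio hypothesis.
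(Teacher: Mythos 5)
Your proof is correct, and it runs on the same engine as the paper's: a single mesh step that jumps over the interval $[t_n/(K+2),\,(K+1)t_n/(K+2)]$ must have length greater than $Kt_n/(K+2)$, while an adjacent step has length less than $t_n/(K+2)$, contradicting Assumption~\ref{ass:meshK}. The packaging, however, differs in a way worth noting. The paper argues by contradiction: it takes $t_m$ to be the last mesh point below $t_n/(K+2)$ and compares $\tau_{m+1}$ with $\tau_m$, disposing of the case where all interior mesh points lie \emph{above} the gap with the remark that ``by symmetry the other case is similar.'' That case is not literally symmetric: there the gap-crossing step is $\tau_1$, which has no predecessor, so one must instead compare it with the \emph{following} step $\tau_2 \le t_n - t_1 < t_n/(K+2)$. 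Your direct formulation --- defining $n^*$ as the last index with $t_{n^*} \le (K+1)t_n/(K+2)$ and separately verifying $n^* \ge 1$ via $t_n \ge \tau_1(1+1/K)$ together with $K/(K+1) \le (K+1)/(K+2)$ --- is exactly an explicit treatment of that edge case, so your version makes airtight the one point the paper handles with a wave of the hand; it also delivers the marginally stronger strict lower bound $t_{n^*} > t_n/(K+2)$. Both arguments are of comparable length and use only the step-ratio hypothesis plus the trivial bound $\tau_{n^*} \le t_{n^*}$.
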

\begin{proof}
Suppose that the result is false. Then we can choose $t_n$ with $n\ge 2$ such that the interval 
$[t_n/(K+2), \, (K+1)t_n/(K+2)]$ contains no mesh point. Since $n\ge 2$, at least one of the intervals $(0, t_n/(K+2))$ 
and $((K+1)t_n/(K+2), t_n)$ contains a mesh point; let us say it is  $(0, t_n/(K+2))$ as by symmetry the other case is similar. 
Set $t_m = \max_j \{t_j: 0 < t_j < t_n/(K+2)\}$. Then we must have $t_{m+1} \in ((K+1)t_n/(K+2), t_n]$. Consequently
$\tau_m = t_m - t_{m-1} \le t_m < t_n/(K+2)$ and
\[
\tau_{m+1} = t_{m+1}-t_m > \frac{(K+1)t_n}{K+2} - \frac{t_n}{K+2} = \frac{Kt_n}{K+2}\,,
\]
which together imply that $\tau_{m+1}/\tau_m > K$, contradicting our hypothesis.
\end{proof}

We now construct our supersolution.
Define $t_N>0$ to be the smallest mesh point satisfying 
\begin{equation}\label{tN}
t_N^\alpha \ge \frac{y_0^{1-\gamma}}{\lambda}\max\left\{ \frac{\alpha  (K+1)^{1-\alpha}(K+2)^\alpha}{\gamma\Gamma(2-\alpha)}\,,
	C_2^{\prime} \right\},
\end{equation}
where 
\[ 
C_2^{\prime} :=  \frac{ \left(K+2\right)^\alpha}{\Gamma(1-\alpha)} 
		\left\{ 1+ \frac{\alpha \left(K+1\right)^{1-\alpha} \left(K+2\right)^{\alpha/\gamma}}{\gamma(1-\alpha)} \right\}.
\]
Then define the function
\begin{equation}\label{w}
w(t) := 
	\begin{cases}
	y_0  &\text{for } 0\le t\le t_N, \\
	C_3^{\prime} t^{-\alpha/\gamma} &\text{for } t_N < t < \infty,
	\end{cases}
\ \text{with }\ C_3^{\prime}:=  y_0 t_N^{\alpha/\gamma}.
\end{equation}
Clearly $w\in C[0,\infty)$ is positive and monotonically decreasing. Set $w_n := w(t_n)$ for $n=1,2,\dots$

By \eqref{L1defn} we have 
\begin{equation}\label{w1}
\mathcal{D}_{L1}^{\alpha} w_n = \mathcal{D}_t^{\alpha} w_I(t_n) =0 > -\lambda w_n\ \text{ for } n=1,2,\dots, N.
\end{equation}

Next, suppose that $t_n \in (t_N, (K+2) t_N]$. Set $w_I'(t_N^+) := \lim_{s\to t_N+0} w_I'(s)$. Note that $w_I'(s)$ is negative and monotonically increasing almost everywhere on $(t_N, \infty)$. Again using \eqref{L1defn}, we get
\begin{align}
\mathcal{D}_{L1}^{\alpha} w_n &= \frac1{\Gamma(1-\alpha)} \int_{s=0}^{t_n} (t_n-s)^{-\alpha} w_I'(s)\,ds \notag\\
	&=  \frac1{\Gamma(1-\alpha)} \int_{s=t_N}^{t_n} (t_n-s)^{-\alpha} w_I'(s)\,ds \notag\\
	&\ge  \frac{w_I'(t_N^+)}{\Gamma(1-\alpha)} \int_{s=t_N}^{t_n} (t_n-s)^{-\alpha}\,ds \notag\\
	&= \frac{w_I'(t_N^+)}{\Gamma(2-\alpha)} (t_n-t_N)^{1-\alpha} \notag\\
	&\ge \frac{w_I'(t_N^+)\left((K+1) t_N\right)^{1-\alpha}}{\Gamma(2-\alpha)}\,,  \label{w2}
\end{align}
as $t_n\le (K+2) t_N$. Now $0 > w_I'(t_N^+) \ge w'(t_N^+) = -C_3^{\prime} (\alpha/\gamma)t_N^{-1-\alpha/\gamma}$ 
because $w''>0$ on $(t_N, t_{N+1})$, so \eqref{w2} yields
\begin{align}
\mathcal{D}_{L1}^{\alpha} w_n &\ge   - \frac{C_3^{\prime} \alpha (K+1)^{1-\alpha}}{\gamma\Gamma(2-\alpha)}\, t_N^{-\alpha-\alpha/\gamma} \notag\\
	&=   - \frac{y_0 \alpha (K+1)^{1-\alpha}}{\gamma\Gamma(2-\alpha)}\, t_N^{-\alpha}  \notag\\
	&\ge - \frac{y_0 \alpha (K+1)^{1-\alpha}}{\gamma\Gamma(2-\alpha)}\, \left(\frac{t_n}{K+2}\right)^{-\alpha},   \label{w3} 
\end{align}
where we used $C_3^{\prime}=  y_0 t_N^{\alpha/\gamma}$ and  $t_n \le (K+2)t_N$.
But
\[
\frac{\alpha  (K+1)^{1-\alpha}(K+2)^\alpha}{\gamma\Gamma(2-\alpha)} \le \lambda y_0^{\gamma-1}t_N^\alpha = \lambda C_3^{\prime \gamma} y_0^{-1}
\]
by \eqref{tN} and \eqref{w}; this inequality and \eqref{w3} give 
\begin{equation}\label{w4}
\mathcal{D}_{L1}^{\alpha} w_n   \ge -\lambda C_3^{\prime \gamma} t_n^{-\alpha} = - \lambda w_n^\gamma \ \text{ for } t_n \in (t_N, (K+2)t_N]. 
\end{equation}

Finally, suppose that  $t_n > (K+2)t_N$. This implies that $n\ge 2$.
Then by \eqref{L1defn} and Lemma~\ref{lem:meshK}, we have
\begin{align*}
\mathcal{D}_{L1}^{\alpha} w_n &=  \frac1{\Gamma(1-\alpha)} \int_{s=t_N}^{t_n} (t_n-s)^{-\alpha} w_I'(s)\,ds \notag\\
	&= \frac1{\Gamma(1-\alpha)} \left(\int_{s=t_N}^{t_{n^*}} + \int_{s=t_{n^*}}^{t_n}\right)(t_n-s)^{-\alpha} w_I'(s)\,ds 
\end{align*}
--- note that $t_{n^*} > t_N$ because $t_n > (K+2)t_N$. Hence
\begin{align}
\mathcal{D}_{L1}^{\alpha} w_n &\ge \frac1{\Gamma(1-\alpha)} \left[ (t_n-t_{n^*})^{-\alpha} \int_{s=t_N}^{t_{n^*}}  w_I'(s)\,ds
		+  w_I'(t_{n^*}^+) \int_{s=t_{n^*}}^{t_n}(t_n-s)^{-\alpha} \,ds\right]  \notag\\
	&=  \frac1{\Gamma(1-\alpha)} \left\{ (t_n-t_{n^*})^{-\alpha} \left[w(t_{n^*})-w(t_N)\right]
		+  w_I'(t_{n^*}^+) \frac{(t_n-t_{n^*})^{1-\alpha}}{1-\alpha}   \right\}  \notag\\
	&>   \frac1{\Gamma(1-\alpha)} \left\{ \left( \frac{t_n}{K+2} \right)^{-\alpha} (-y_0)
		+  w'(t_{n^*}) \left(\frac{K+1}{K+2}\right)^{1-\alpha} \frac{t_n^{1-\alpha}}{1-\alpha} \right\} \label{w5}
\end{align}
by Lemma~\ref{lem:meshK} and $w'(t_{n^*})  < w_I'(t_{n^*}^+)$ since $w''>0$ on $(t_N,\infty)$. But
\[
w'(t_{n^*}) = -C_3^{\prime} \frac{\alpha}{\gamma}  t_{n^*}^{-\frac{\alpha}{\gamma}-1} 
	\ge - y_0 t_N^{\alpha/\gamma}\, \frac{\alpha}{\gamma} \left(\frac{t_n}{K+2}\right)^{-\frac{\alpha}{\gamma}-1}
	>  - y_0  \frac{\alpha}{\gamma} \left(\frac1{K+2}\right)^{-\frac{\alpha}{\gamma}-1}t_n^{-1}
\]
by the definition of $C_3^{\prime}$, another appeal to Lemma~\ref{lem:meshK}, and $t_N < t_n$.
Substituting this inequality into~\eqref{w5} yields
\begin{align}
\mathcal{D}_{L1}^{\alpha} w_n 
	&>   -\frac{y_0 t_n^{-\alpha}}{\Gamma(1-\alpha)} \left\{ \left(K+2\right)^\alpha
		+\frac{\alpha}{\gamma} \left(K+2\right)^{\frac{\alpha}{\gamma}+1} 
			\left(\frac{K+1}{K+2}\right)^{1-\alpha} \frac1{1-\alpha} \right\}  \notag\\
	&= - C_2^{\prime} y_0  t_n^{-\alpha}   \ge - \lambda w_n^\gamma   \label{w6}
\end{align}
since $\lambda C_3^{\prime \gamma} = \lambda y_0^\gamma t_N^\alpha \ge C_2^{\prime} y_0$ by the definition~\eqref{tN} of~$t_N$.

From \eqref{w1}, \eqref{w4} and \eqref{w6}, we see that $\mathcal{D}_{L1}^{\alpha} w_n \ge - \lambda w_n^\gamma $ for all $t_n>0$. Recall that $w_0 = y_0$. Thus, $\{w_n\}$ is a supersolution for $\{y_n\}$.

We have now proved the following optimal decay rate result.

\begin{theorem}\label{thm:L1}
Let the mesh satisfy \eqref{hatt} and Assumption~\ref{ass:meshK}.
When the L1 scheme \eqref{L1scheme} is used to solve \eqref{FODE}, there exist positive constants $C_{5}'$ and $C_{6}'$, which are independent of $n$ and the mesh,  such that 
\[
 \frac{C_{5}'}{1+t_{n}^{\alpha/\gamma}}\leq y_{n}\leq \frac{C_{6}'}{1+t_{n}^{\alpha/\gamma}} \text{ for } n =0,1,2,\dots
\]
\end{theorem}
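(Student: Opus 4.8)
The plan is to combine the two one-sided estimates already proved in Sections~\ref{sec:discsub} and~\ref{sec:discsuper} through a single application of the comparison principle. In Section~\ref{sec:discsub} the sequence $\{v_n\}$ of~\eqref{v} was shown to satisfy $\mathcal{D}_{L1}^{\alpha}(v_n)\le -\lambda v_n^\gamma$ for all $t_n>0$ with $v_0=y_0$, so it is a discrete subsolution for $\{y_n\}$, while in Section~\ref{sec:discsuper} the sequence $\{w_n\}$ of~\eqref{w} was shown to satisfy $\mathcal{D}_{L1}^{\alpha}(w_n)\ge -\lambda w_n^\gamma$ for all $t_n>0$ with $w_0=y_0$, so it is a discrete supersolution. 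Since Lemma~\ref{lem:Liwang21} remains valid for the L1 operator on an arbitrary mesh (as observed just after~\eqref{L1defn}), applying it with the nondecreasing function $f(y)=\lambda y^\gamma$ yields $v_n\le y_n\le w_n$ for every $n$. The theorem then reduces to checking that both enveloping sequences obey two-sided bounds of the form $C/(1+t_n^{\alpha/\gamma})$ with constants independent of $n$ and of the mesh.

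For the lower bound I would split on $t_n\le\hat t$ versus $t_n>\hat t$. When $t_n\le\hat t$ the monotonicity of $v$ and~\eqref{vthat} give $v_n\ge v(\hat t)\ge y_0/2$; when $t_n>\hat t$ one has $v_n=C_1^{\prime}t_n^{-\alpha/\gamma}$ with $C_1^{\prime}$ bounded below by the fixed positive constant recorded after~\eqref{vthat}. Taking $C_5^{\prime}=\min\{y_0/2,\,C_1^{\prime}\}$ and using $1+t_n^{\alpha/\gamma}\ge\max\{1,\,t_n^{\alpha/\gamma}\}$ converts these two cases into the single lower envelope $y_n\ge C_5^{\prime}/(1+t_n^{\alpha/\gamma})$. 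The upper bound is entirely parallel: for $t_n\le t_N$ one has $w_n=y_0$ and for $t_n>t_N$ one has $w_n=C_3^{\prime}t_n^{-\alpha/\gamma}$, so the choice $C_6^{\prime}=y_0(1+t_N^{\alpha/\gamma})$ gives $y_n\le C_6^{\prime}/(1+t_n^{\alpha/\gamma})$ after the same elementary manipulations.

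The one genuinely nontrivial step---and the place where Assumption~\ref{ass:meshK} is indispensable---is bounding $C_6^{\prime}=y_0(1+t_N^{\alpha/\gamma})$, equivalently $t_N$, above independently of the mesh. Writing $T^{*}$ for the threshold quantity on the right of~\eqref{tN}, the definition of $t_N$ gives $t_{N-1}<(T^{*})^{1/\alpha}\le t_N$. On a completely arbitrary mesh $t_N$ could overshoot $(T^{*})^{1/\alpha}$ by an arbitrarily large amount, which would make $C_6^{\prime}$ mesh-dependent; Assumption~\ref{ass:meshK} precludes this, since $\tau_N\le K\tau_{N-1}\le K t_{N-1}$ forces $t_N=t_{N-1}+\tau_N\le(K+1)t_{N-1}<(K+1)(T^{*})^{1/\alpha}$, whence $t_N^{\alpha/\gamma}<(K+1)^{\alpha/\gamma}(T^{*})^{1/\gamma}$ and $C_6^{\prime}$ depends only on the data and $K$. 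I expect the only remaining fuss to be the small-index bookkeeping needed to guarantee $N\ge 2$ (so that $\tau_{N-1}$ is available for the estimate $\tau_N\le K\tau_{N-1}$), which is secured by the early mesh points that condition~\eqref{hatt} already supplies. Assembling these constants completes the proof.
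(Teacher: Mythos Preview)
Your proposal is correct and follows exactly the paper's approach: the paper simply states the theorem immediately after completing the subsolution and supersolution constructions of Sections~\ref{sec:discsub}--\ref{sec:discsuper}, leaving the application of the comparison principle and the extraction of the constants implicit. Your additional care in bounding $t_N$ (and hence $C_6'$) independently of the mesh via Assumption~\ref{ass:meshK}---together with the fallback $t_1\le\hat t$ from~\eqref{hatt} when $N=1$---fills in a detail that the paper does not spell out.
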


\section{Optimal decay rate for numerical solutions of F-PDEs}\label{sec:discFPDE}
Consider F-PDEs of the form~\eqref{eq:FPDEs} that satisfy the structural condition \eqref{eq:FPstr}.
In this section we shall how the results of Section~\ref{sec:discFODE} for F-ODEs can be extended to this class of F-PDEs.

Consider semi-discretisations of~\eqref{eq:FPDEs} where the fractional derivative $\mathcal{D}_{t}^{\alpha}u$ is discretised on a uniform temporal mesh $\{t_n: t_{n}=nh\ \text{ for } n=0,1,2,\dots\}$   by a $\mathcal{CM}$-preserving scheme. Writing $U^n=U^{n}(x)$ for the approximation of $u(\cdot, t_n)$ at each $t_n$, we have 
\begin{equation}\label{eq:disFPDEs}
\begin{cases}
&\mathcal{D}_{h}^{\alpha}(U^{n})+\mathcal{N}[U^{n}](x)=0 \  \text{ for } x\in\Omega \text{ and } n=1,2,\dots, \\
&U^{n}(x)=0\ \text{ for }x\in\mathbb{R}^{d}\setminus \Omega\text{ and } n=1,2,\dots, \\
&U^{0}(x)=u_0(x)\ \text{ for } x\in\Omega.
\end{cases}
\end{equation}
As for \eqref{eq:FPDEs} in Section~\ref{sec:timefracprob}, we assume that $U^{0}\ge 0$, $U^{0}\not\equiv 0$, and $U^0\in L^{p}(\Omega)$.  
We also assume that for each $n\ge 1$ the discrete solution $U^{n}\not\equiv 0$, \tcm{$U^n\in L^{s}(\Omega)$ for some $s\in(1, \infty)$} 
and $U^n$ satisfies a structural condition similar to~\eqref{eq:FPstr}:
\begin{equation} \label{eq:disFPstr}
\tcm{\|U^{n}\|_{L^{s}(\Omega)}^{s-1+\gamma}\leq C_s \int_{\Omega} \left( U^{n}(x) \right)^{s-1} \mathcal{N}[U^{n}](x)dx, }
\end{equation}
where $\gamma\in(0, \infty)$ 
and some constant $\tcm{C_s}>0$ that is independent of $n$ and $h$. 

Under these hypotheses, our aim is to derive results for $\|U^n\|_{ L^{p}(\Omega)}$ to show that this semidiscrete solution has the same long-time power-law decay as the continuous solution in Lemma~\ref{lem:non-decay2}.
We begin with the following technical lemma.

\begin{lemma}\label{lem:keyineq}
\tcm{Let $s$ be as in \eqref{eq:disFPstr}. Assume that $U^{n}\in L^{s}(\Omega)$ for all $n\geq 0$.}
Set $V^{n}=\|U^{n}\|_{\tcm{L^{s}}(\Omega)}$. Let $\mathcal{D}_{h}^{\alpha}(\cdot)$ be a $\mathcal{CM}$-preserving scheme.
Then 
\begin{equation} \label{eq:keyineq}
\begin{split}
\tcm{\left( V^{n} \right)^{s-1} \mathcal{D}_{h}^{\alpha} \left( V^{n} \right) \leq \int_{\Omega} \left( U^{n}(x) \right)^{s-1} } \mathcal{D}_{h}^{\alpha} \left( U^{n} \right)(x)\,dx\ \text{ for }n= 1,2,\dots
\end{split}
\end{equation}
\end{lemma}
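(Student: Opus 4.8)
The plan is to prove the pointwise-in-$n$ inequality \eqref{eq:keyineq} by expanding both sides using the convolution form \eqref{eq:CMsch} of the operator $\mathcal{D}_h^\alpha$ and then matching terms. Recall that for a sequence $\{a_n\}$ one has $\mathcal{D}_h^\alpha(a_n)=h^{-\alpha}\sum_{k=0}^{n}\omega_k(a_{n-k}-a_0)$, so the essential structure is a weighted combination of differences $a_{n-k}-a_0$ against the weights $\omega_k$, whose sign pattern ($\omega_0>0$, $\omega_k<0$ for $k\ge 1$) from \eqref{eq:coeff} will be the engine of the argument. The left side is $(V^n)^{s-1}\mathcal{D}_h^\alpha(V^n)$ and the right side is $\int_\Omega (U^n)^{s-1}\mathcal{D}_h^\alpha(U^n)\,dx$; after multiplying through by $h^\alpha$, both reduce to sums over $k$ of $\omega_k$ times a term that I want to compare pointwise in $k$.

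The key analytic tool will be the elementary convexity inequality for the map $\xi\mapsto|\xi|^s$ (equivalently, the inequality $a^{s-1}(a-b)\ge \frac1s(a^s-b^s)$ type estimate, or the reverse form needed here). Concretely, first I would isolate the coefficient of $\omega_0$, which carries no difference, and then for each $k\ge 1$ use $\omega_k<0$ so that establishing the claimed inequality amounts to showing, for each fixed $k\ge 1$, a pointwise bound of the form
\[
(V^n)^{s-1}\bigl(V^n - V^{n-k}\bigr)\ge \int_\Omega (U^n)^{s-1}\bigl(U^n - U^{n-k}\bigr)\,dx,
\]
(with the analogous statement involving $V^0,U^0$ for the $a_0$ terms). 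Because $V^n=\|U^n\|_{L^s(\Omega)}$, the left side equals $(V^n)^s - (V^n)^{s-1}V^{n-k}$, and the right side expands to $(V^n)^s - \int_\Omega (U^n)^{s-1}U^{n-k}\,dx$ since $\int_\Omega(U^n)^s\,dx=(V^n)^s$. So the whole inequality collapses to showing, for each $m\ge 0$,
\[
\int_\Omega (U^n)^{s-1}U^{m}\,dx \le (V^n)^{s-1}V^{m},
\]
which is exactly Hölder's inequality with exponents $s/(s-1)$ and $s$: the left side is bounded by $\|(U^n)^{s-1}\|_{L^{s/(s-1)}(\Omega)}\|U^m\|_{L^s(\Omega)}=(V^n)^{s-1}V^m$.

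The main obstacle — and the point requiring care — is the sign bookkeeping: the inequality direction in \eqref{eq:keyineq} is dictated by the negativity of $\omega_k$ for $k\ge 1$, so the per-$k$ Hölder bounds must all point the same way after being weighted by $|\omega_k|$, and I must verify that the $\omega_0$ term (nonnegative coefficient, no difference, and contributing $\omega_0(V^n)^s$ on the left versus $\omega_0\int_\Omega(U^n)^s\,dx$ on the right, which are \emph{equal}) cancels cleanly rather than spoiling the estimate. I would therefore organise the proof by writing $h^\alpha(V^n)^{s-1}\mathcal{D}_h^\alpha(V^n)=\sum_{k=0}^n\omega_k\bigl[(V^n)^{s-1}V^{n-k}-(V^n)^{s-1}V^0\bigr]$ and the corresponding integral expression, subtract, and observe that all the $k\ge 1$ terms have coefficient $\omega_k<0$ multiplying a nonnegative quantity $\bigl[(V^n)^{s-1}V^{n-k}-\int_\Omega(U^n)^{s-1}U^{n-k}\bigr]\ge 0$ furnished by Hölder, so each contributes with the correct sign; the $k=0$ and $a_0$ terms must be checked to balance exactly. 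This reduces the entire lemma to one clean application of Hölder's inequality term by term, which is the natural and essentially only available tool, given that we must relate an integral of a product to the product of $L^s$ norms.
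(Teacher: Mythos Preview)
Your approach is correct and essentially the same as the paper's: expand via \eqref{eq:CMsch}, apply H\"older's inequality $\int_\Omega(U^n)^{s-1}U^m\,dx\le(V^n)^{s-1}V^m$ to every cross term, and use the sign pattern $\omega_k<0$ for $k\ge1$. One small correction: the $a_0$ terms do not ``balance exactly'' but rather contribute with the favourable sign, since their combined coefficient is $-\sum_{k=0}^{n-1}\omega_k=\delta_n<0$; the paper makes this transparent by writing $\mathcal{D}_h^\alpha(U^n)=h^{-\alpha}\bigl(\sum_{j=0}^{n-1}\omega_jU^{n-j}+\delta_nU^0\bigr)$, so the $U^0$ term is handled by the same H\"older bound as the $\omega_j$ terms with $j\ge1$.
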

\begin{proof}
For $n\ge 1$ it follows from \eqref{eq:CMsch} that
\begin{equation*} 
\begin{split}
\int_{\Omega} &\left( U^{n}(x) \right)^{s-1} \cdot \mathcal{D}_{h}^{\alpha} \left( U^{n} \right)(x)dx
= \int_{\Omega} \left( U^{n}(x) \right)^{s-1} \cdot \frac{1}{h^{\alpha}}
	\left(\sum\limits_{k=0}^{n-1}\omega_{j}U^{n-j}+\delta_{n}U^{0}\right) (x)dx\\
 &=\frac{1}{h^{\alpha}}\left(\omega_{0}\|U^{n}\|^{s}_{L^{s}(\Omega)}+\sum\limits_{k=1}^{n-1}\omega_{j} \int_{\Omega} \left( U^{n}(x) \right)^{s-1} U^{n-j}(x)dx +\delta_{n} \int_{\Omega} \left( U^{n}(x) \right)^{s-1}U^{0} (x)dx \right)\\
 &\geq \frac{1}{h^{\alpha}}\left(\omega_{0} \|U^{n}\|^{s}_{L^{s}(\Omega)}+\sum\limits_{k=1}^{n-1}\omega_{j}  \left( \|U^{n-j}\|_{L^{s}(\Omega)} \|(U^{n})^{s-1}\|_{L^{q}(\Omega)} \right) +\delta_{n} \left( \|U^{0}\|_{L^{s}(\Omega)} \|(U^{n})^{s-1}\|_{L^{q}(\Omega)} \right) \right),\\
\end{split}
\end{equation*}
using  $\delta_{n}<0$ for $n\ge 1$, $\omega_{k}<0$ for $k\geq 1$, and H\"older's inequality with $q$ defined by 
$\frac{1}{s}+\frac{1}{q}=1$.
But $\|(U^{n})^{s-1}\|_{L^{q}(\Omega)}=\left( \int_{\Omega} \left| U^{n}(x) \right|^{q(s-1)}dx\right)^{1/q}= \left( \int_{\Omega} \left| U^{n}(x) \right|^{\frac{s}{s-1} (s-1)}dx\right)^{\frac{s-1}{s} } =\|(U^{n})\|^{s-1}_{L^{s}(\Omega)}$,
so we get 
\begin{equation*} 
\begin{split}
\int_{\Omega} \left( U^{n}(x) \right)^{s-1} \cdot \mathcal{D}_{h}^{\alpha} \left( U^{n} \right)(x)dx
 &\geq \|(U^{n})\|^{s-1}_{L^{s}(\Omega)}  \cdot \frac{1}{h^{\alpha}}\left(\omega_{0} \|U^{n}\|_{L^{s}(\Omega)}+\sum\limits_{k=1}^{n-1}\omega_{j}   \|U^{n-j}\|_{L^{s}(\Omega)}  +\delta_{n} \|U^{0}\|_{L^{s}(\Omega)}  \right)\\
 &=\left( V^{n} \right)^{s-1}\mathcal{D}_{h}^{\alpha} \left( V^{n} \right).
\end{split}
\end{equation*}
\end{proof}

The discrete energy-type inequality of this lemma is often a key step in the analysis of F-PDEs. An alternative discrete energy inequality, where $\tcm{s=2}$, was proved for the Gr\"{u}nwald-Letnikov  and L1 discretisations  applied to initial-value problems  $\mathcal{D}_{t}^{\alpha}y(t)= f(t,y(t))$ in~\cite[Lemma 3.2]{wang2018long}. 

The proof of Lemma~\ref{lem:keyineq} depends only on the helpful signs of the coefficients in $\mathcal{CM}$-preserving schemes; the bounds of~\eqref{eq:coeffdecay} are not needed. This important observation shows that this lemma remains valid if we replace the $\mathcal{CM}$-preserving scheme by the L1 discretization that was defined in \eqref{eq:L1a} for nonuniform meshes. 
\begin{lemma}\label{lem:keyineq2}
Assume the hypotheses of Lemma~\ref{lem:keyineq}. Then for the L1 scheme defined in \eqref{eq:L1a} on an arbitrary mesh, one has
\begin{equation} \label{eq:keyineq2}
\begin{split}
\tcm{\left( V^{n} \right)^{s-1} \mathcal{D}_{L1}^{\alpha} \left( V^{n} \right) \leq \int_{\Omega} \left( U^{n}(x) \right)^{s-1} }  \mathcal{D}_{L1}^{\alpha} \left( U^{n} \right)(x)\,dx\ \text{ for }n= 1,2,\dots
\end{split}
\end{equation}
\end{lemma}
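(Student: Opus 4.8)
The plan is to mimic the proof of Lemma~\ref{lem:keyineq} essentially line by line, replacing the $\mathcal{CM}$-preserving representation~\eqref{eq:CMsch} with the L1 representation~\eqref{eq:L1a}. This is possible because the earlier argument used only the signs of the discrete coefficients and never invoked the decay bounds~\eqref{eq:coeffdecay}. Concretely, I would first expand $\int_\Omega (U^n)^{s-1}\mathcal{D}_{L1}^{\alpha}(U^n)\,dx$ using~\eqref{eq:L1a}, separating it into the self-term (coefficient $d_{n,1}/\Gamma(2-\alpha)$ multiplying $U^n$), the past terms (coefficients $(d_{n,k+1}-d_{n,k})/\Gamma(2-\alpha)$ multiplying $U^{n-k}$ for $1\le k\le n-1$), and the initial term (coefficient $-d_{n,n}/\Gamma(2-\alpha)$ multiplying $U^0$).

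The crucial observation is that these coefficients reproduce exactly the sign pattern of the $\mathcal{CM}$-preserving case: the property $0 < d_{n,k+1} < d_{n,k}$ recorded just after~\eqref{eq:L1a} gives a positive self-coefficient together with negative past-coefficients and a negative initial-coefficient, playing the roles of $\omega_0/h^\alpha>0$, $\omega_k/h^\alpha<0$ (for $k\ge 1$), and $\delta_n/h^\alpha<0$ respectively. This sign pattern is precisely what drives the Hölder step in the correct direction.

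Next I would treat each term as in Lemma~\ref{lem:keyineq}. The self-term contributes $d_{n,1}\,\|U^n\|_{L^s(\Omega)}^s$ exactly. For every past term and for the initial term, Hölder's inequality with $1/s+1/q=1$ gives $\int_\Omega (U^n)^{s-1} U^{m}\,dx \le \|U^{m}\|_{L^s(\Omega)}\,\|(U^n)^{s-1}\|_{L^q(\Omega)}$; since each of these coefficients is negative, multiplying through \emph{reverses} the inequality, producing a lower bound. Using the identity $\|(U^n)^{s-1}\|_{L^q(\Omega)} = \|U^n\|_{L^s(\Omega)}^{s-1} = (V^n)^{s-1}$ already verified inside the proof of Lemma~\ref{lem:keyineq}, I factor out $(V^n)^{s-1}$ and recognise the remaining bracket, via~\eqref{eq:L1a} applied to the scalar sequence $\{V^m\}$, as $\mathcal{D}_{L1}^{\alpha}(V^n)$. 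This yields exactly~\eqref{eq:keyineq2}.

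I expect no serious obstacle: the only point requiring verification is the sign structure of the L1 weights, which is immediate from $0<d_{n,k+1}<d_{n,k}$. In particular, the argument never uses mesh uniformity or the magnitudes of the coefficients, so it applies verbatim on an arbitrary mesh; the nonuniformity enters only through the numerical values of the $d_{n,k}$, which are irrelevant to this computation. The only mild care needed is bookkeeping of the index ranges in~\eqref{eq:L1a} (the self-, past-, and initial-contributions), so that the reassembled bracket matches $\mathcal{D}_{L1}^{\alpha}(V^n)$ precisely.
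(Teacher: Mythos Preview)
Your proposal is correct and follows essentially the same approach as the paper: expand using the L1 representation~\eqref{eq:L1a}, use the sign pattern $0<d_{n,k+1}<d_{n,k}$ to apply H\"older's inequality in the correct direction on each non-self term, factor out $(V^n)^{s-1}$, and reassemble. The paper's own proof is in fact briefer than yours --- it writes out the expansion and then simply says ``one can use this property to complete the argument by imitating the proof of Lemma~\ref{lem:keyineq}; we omit the details.''
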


\begin{proof}
Similarly to the proof of Lemma \ref{lem:keyineq},  for $n\ge 1$ it follows from \eqref{eq:L1a} that
\begin{equation*} 
\begin{split}
\int_{\Omega} &\left( U^{n}(x) \right)^{s-1} \cdot \mathcal{D}_{L1}^{\alpha} \left( U^{n} \right)(x)\,dx
= \int_{\Omega} \left( U^{n}(x) \right)^{s-1} \frac1{\Gamma(2-\alpha)} \left[ d_{n,1} U_n - d_{n,n}U_0 
	+ \sum^{n-1}_{k=1}U_{n-k} \left(d_{n,k+1}-d_{n,k}\right)  \right]  dx,\\
\end{split}
\end{equation*}
where the scheme coefficients have the helpful property $0<d_{n,k+1}<d_{n,k}$ for all $k$ and $n$.  One can use this property to complete the argument by imitating the proof of Lemma~\ref{lem:keyineq}; we omit the details.
\end{proof}

Our main result in this section is the following theorem, which is a discrete analogue of Lemma~\ref{lem:non-decay2}.

\begin{theorem}\label{thm:disFPDE}
Let $\mathcal{D}_{h}^{\alpha}$ be a $\mathcal{CM}$-preserving scheme and let $U^{n}\ge 0$ be the solution of \eqref{eq:disFPDEs} for $n=0,1,\dots$ with the structural condition \eqref{eq:disFPstr}. Then 
\begin{equation} \label{eq:FPDEesti}
\begin{split}
    \mathcal{D}_{h}^{\alpha} \left( \|U^{n}\|_{\tcm{L^{s}(\Omega)}} \right) \leq - \frac{\|U^{n}\|_{\tcm{L^{s}(\Omega)}}^{\gamma}}{\tcm{C_s}}\ \text{ for }n= 1,2,\dots
\end{split}
\end{equation}
Furthermore, there exists a positive constant $h_0$ such that for any $h$ satisfying $0<h\le h_0$, one has
\begin{equation} \label{eq:FPdecay}
\begin{split}
 \|U^{n}\|_{\tcm{L^{s}(\Omega)}} \leq \frac{C_d^{*}}{1+t_{n}^{\alpha/\gamma}} \ \text{ for }n= 1,2,\dots,
\end{split}
\end{equation}
where the constant $\tcm{C_s^*=C_s^*(C_s,\gamma, \alpha, U^0)}>0$. 

If the $\mathcal{CM}$-preserving scheme in \eqref{eq:disFPDEs} is replaced by the L1 scheme \eqref{eq:L1a} on an arbitrary mesh, then one again obtains \eqref{eq:FPDEesti}.  If in addition this mesh satisfies Assumption~\ref{ass:meshK},  then one also obtains~\eqref{eq:FPdecay}.
\end{theorem}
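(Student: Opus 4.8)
The plan is to reduce the F-PDE problem to the scalar F-ODE already treated in Theorem~\ref{thm:main}, by showing that the scalar sequence $V^n := \|U^n\|_{L^s(\Omega)}$ is itself a discrete subsolution of an equation of the form~\eqref{FODE}. Once the differential inequality~\eqref{eq:FPDEesti} is established, the decay bound~\eqref{eq:FPdecay} follows immediately from the discrete comparison principle (Lemma~\ref{lem:Liwang21}) combined with the upper bound supplied by Theorem~\ref{thm:main}.

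First I would prove~\eqref{eq:FPDEesti}. Since $U^n\ge 0$ and $U^n\not\equiv 0$, we have $V^n>0$ for every $n$. From the semidiscrete equation~\eqref{eq:disFPDEs} we have $\mathcal{D}_h^\alpha(U^n)(x) = -\mathcal{N}[U^n](x)$, so multiplying by $(U^n)^{s-1}$, integrating over $\Omega$, and invoking the structural condition~\eqref{eq:disFPstr} gives
\[
\int_\Omega \left(U^n(x)\right)^{s-1}\mathcal{D}_h^\alpha(U^n)(x)\,dx = -\int_\Omega \left(U^n(x)\right)^{s-1}\mathcal{N}[U^n](x)\,dx \le -\frac{1}{C_s}\left(V^n\right)^{s-1+\gamma}.
\]
Combining this with the energy-type inequality~\eqref{eq:keyineq} of Lemma~\ref{lem:keyineq} yields $\left(V^n\right)^{s-1}\mathcal{D}_h^\alpha(V^n) \le -\frac{1}{C_s}\left(V^n\right)^{s-1+\gamma}$, and dividing by the positive quantity $\left(V^n\right)^{s-1}$ produces exactly~\eqref{eq:FPDEesti}.

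Next I would derive~\eqref{eq:FPdecay}. Inequality~\eqref{eq:FPDEesti} states precisely that $\{V^n\}$ is a discrete subsolution, in the sense of Lemma~\ref{lem:Liwang21}, of the scalar problem $\mathcal{D}_h^\alpha(y_n) = -\lambda y_n^\gamma$ with $\lambda := 1/C_s$ and initial value $y_0 = V^0 = \|U^0\|_{L^s(\Omega)}$; note that $f(y)=\lambda y^\gamma$ is nondecreasing for $y\ge 0$. Letting $\{y_n\}$ denote the solution of this scalar problem, the comparison principle gives $V^n \le y_n$ for all $n$ since $V^0 = y_0$, while Theorem~\ref{thm:main} (applied with this $\lambda$ and $y_0$) supplies $h_0>0$ such that $y_n \le C_6/(1+t_n^{\alpha/\gamma})$ for all $0<h\le h_0$. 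Chaining these bounds yields~\eqref{eq:FPdecay} with $C_d^* = C_6$.

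Finally, the L1 statements follow by the same two-step argument, using Lemma~\ref{lem:keyineq2} in place of Lemma~\ref{lem:keyineq} to obtain~\eqref{eq:FPDEesti}, and Theorem~\ref{thm:L1} in place of Theorem~\ref{thm:main} to obtain~\eqref{eq:FPdecay}. For the decay bound it suffices to compare $\{V^n\}$ directly with the L1 supersolution $\{w_n\}$ of Section~\ref{sec:discsuper}: since $V^n$ is a subsolution, $w_n$ a supersolution, and $V^0 = w_0$, the comparison principle gives $V^n \le w_n$. The construction of $\{w_n\}$ requires only Assumption~\ref{ass:meshK}; the subsolution requirement~\eqref{hatt} plays no role in an upper bound, which explains why the theorem imposes only Assumption~\ref{ass:meshK}. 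The genuinely hard part of the whole program has, in effect, already been carried out in the delicate sub-/supersolution constructions of Theorems~\ref{thm:main} and~\ref{thm:L1}; the only real subtlety remaining here is verifying that $V^n$ inherits the scalar differential inequality from the integrated structural condition, together with ensuring that all sequences stay positive so that the division by $\left(V^n\right)^{s-1}$ and the monotonicity of $f$ are legitimate.
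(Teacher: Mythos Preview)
Your proposal is correct and follows essentially the same route as the paper: multiply the semidiscrete PDE by $(U^n)^{s-1}$, integrate, combine Lemma~\ref{lem:keyineq} with the structural condition~\eqref{eq:disFPstr} to obtain~\eqref{eq:FPDEesti}, and then invoke the comparison principle together with the supersolution already built in the proof of Theorem~\ref{thm:main} (respectively Section~\ref{sec:discsuper} for the L1 case) to deduce~\eqref{eq:FPdecay}. The only cosmetic difference is that the paper also treats the degenerate case $\|U^n\|_{L^s(\Omega)}=0$ explicitly, whereas you rely directly on the standing hypothesis $U^n\not\equiv 0$ to justify the division by $(V^n)^{s-1}$; both are acceptable.
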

\begin{proof}
Let $n\ge 1$. Multiplying both sides of the PDE in~\eqref{eq:disFPDEs} by $\left( U^{n}(x) \right)^{s-1} $ and integrating with respect to $x$, we obtain 
\begin{equation*} 
\begin{split}
 \int_{\Omega} \left( U^{n}(x) \right)^{s-1} \mathcal{D}_{h}^{\alpha} \left( U^{n} \right)(x)dx=-\int_{\Omega} \left( U^{n}(x) \right)^{s-1} \mathcal{N}[U^{n}](x)dx.
\end{split}
\end{equation*}
Now an appeal to Lemma~\ref{lem:keyineq} and the structural assumption \eqref{eq:disFPstr} yield
\begin{equation*} 
\begin{split}
\|U^{n}\|_{L^{s}(\Omega)}^{s-1} \mathcal{D}_{h}^{\alpha} \left( \|U^{n}\|_{L^{s}(\Omega)} \right) \leq -\frac{\|U^{n}\|_{L^{s}(\Omega)}^{s-1+\gamma}}{C_d}.
\end{split}
\end{equation*}
The inequality \eqref{eq:FPDEesti} follows immediately if $\|U^{n}\|_{L^{s}(\Omega)}\neq 0$. If  $\|U^{n}\|_{L^{s}(\Omega)}= 0$, then 
\begin{equation*} 
\begin{split}
\mathcal{D}_{h}^{\alpha} \left( U^{n}\right) 
= \frac{1}{h^{\alpha}}\left(\sum_{k=0}^{n-1}\omega_{k}U^{n-k}+\delta_{n}U^{0}\right)\leq 0
\end{split}
\end{equation*}
because of the hypothesis $U^{k}\geq 0$ for all $k$ and the scheme properties $\delta_{n}<0$ and $\omega_{k}<0$ for $k \geq 1$. Thus we have again obtained~\eqref{eq:FPDEesti}.

Now that \eqref{eq:FPDEesti} has been proved, the inequality \eqref{eq:FPdecay} follows immediately from Lemma~\ref{lem:Liwang21} and the supersolution analysis in Theorem~\ref{thm:main}. 

The proof of  \eqref{eq:FPDEesti} for the L1 scheme \eqref{eq:L1a} on an arbitrary mesh  follows from Lemma \ref{lem:keyineq2}  in a similar manner.  \tcm{As we pointed out in Section~\ref{sec:L1}, Lemma~\ref{lem:Liwang21} is easily extended to the L1 scheme on arbitrary meshes since its proof depends only on the signs of the coefficients in the scheme.
Hence} if the mesh satisfies Assumption~\ref{ass:meshK}, then the supersolution analysis of Section~\ref{sec:discsuper} yields  \eqref{eq:FPdecay}.
\end{proof}

Many concrete examples that satisfy the structural assumptions \eqref{eq:FPstr} or \eqref{eq:disFPstr} can be found in \cite{vergara2015optimal, dipierro2019decay, affili2019decay}. 
We will use these specific examples to support our theoretical results by numerical experiments.

\section{Applications and numerical experiments}\label{sec:NumExp}

\subsection{Nonlinear  F-ODEs}\label{sec:NumFODE}
Recall the scalar model F-ODE \eqref{FODE}: $\mathcal{D}_{t}^{\alpha}y(t)=-\lambda y(t)^{\gamma}$ for $t>0$,
with initial value $y(0)=y_0>0$. 
In this section we verify numerically the optimal numerical decay rates that are predicted by Theorem~\ref{thm:main} for $\mathcal{CM}$-preserving methods applied to this problem. Various values of the parameters and initial values will be tested. 

\begin{figure}[!ht]
\begin{center}
$\begin{array}{cc}
 \includegraphics[scale=0.41]{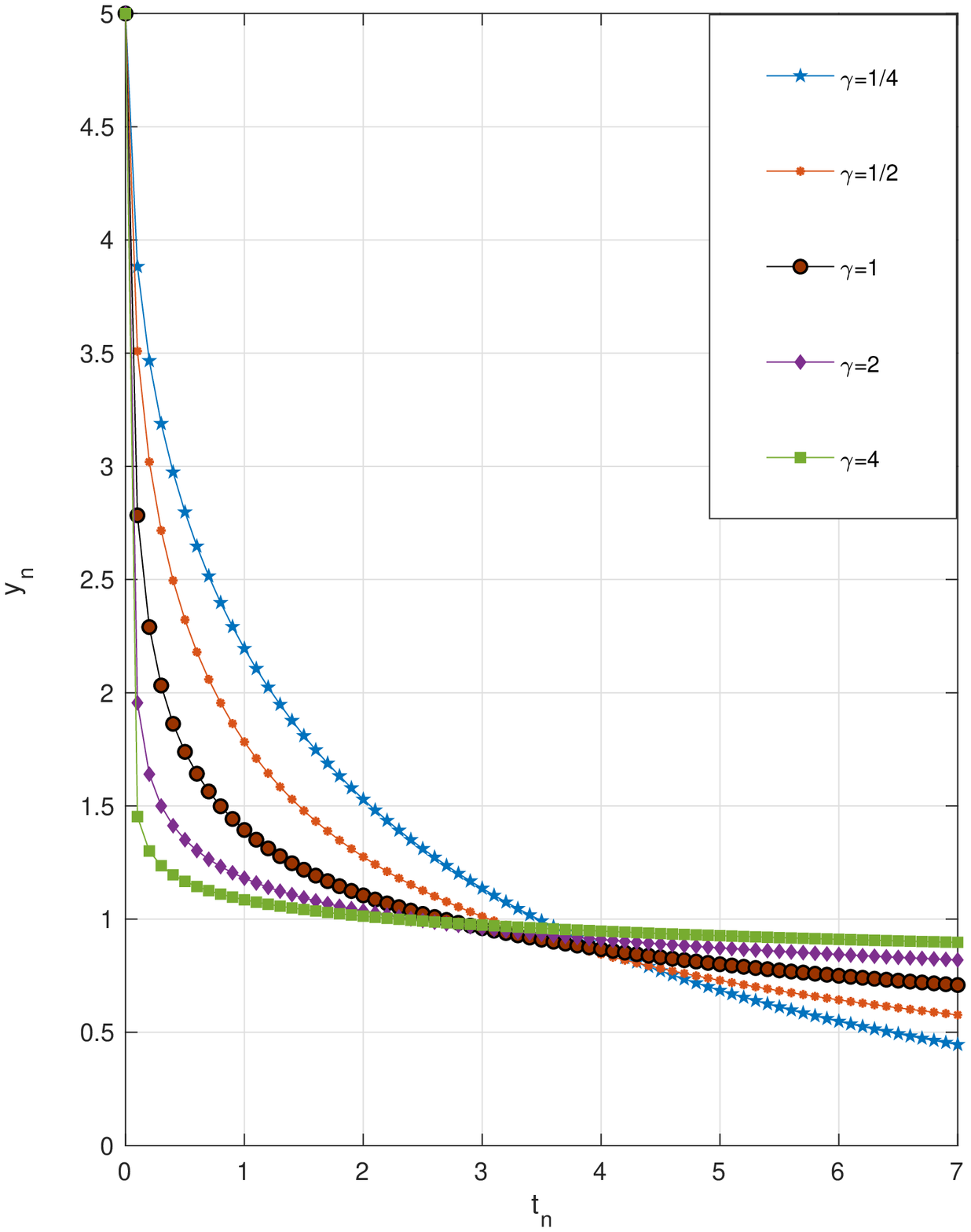}\quad
  \includegraphics[scale=0.45]{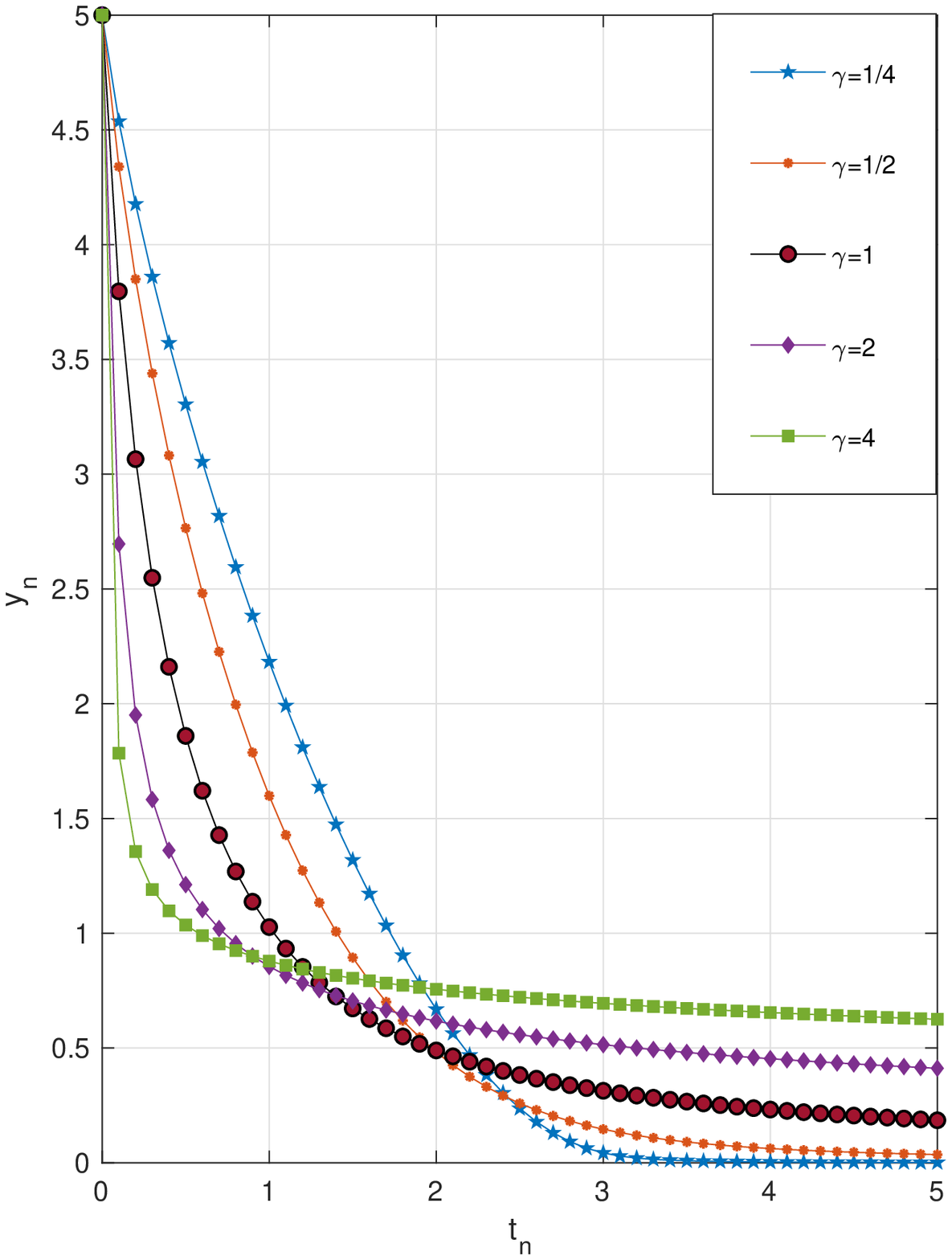}
 \end{array}$
\end{center}
\caption{Numerical solutions for $\alpha=0.4$ (left) and $\alpha=0.8$ (right) with parameter $\gamma=1/4, 1/2, 1, 2, 4$.
}
\label{ex11}
\end{figure}

Figure~\ref{ex11} plots the numerical solutions computed by the Gr\"unwald-Letnikov scheme for various values of~$\alpha$ and~$\gamma$, where $\lambda=2, h=0.1$ and $y_0=5$. 
We see that all numerical solutions maintain monotonicity and positivity,  
as expected.  \tcm{Here we graph the numerical solutions only for $0<t_{n}<10$, in order to exhibit more clearly the differing decay rates of the computed solutions for various values of~$\gamma$. These decay rates are preserved on longer intervals such as $0 < t_n < 50$.}
To test the numerical decay rate quantitatively, we introduce the index function 
\begin{equation} \label{eq:indexp}
\begin{split}
q_{\alpha,\lambda}(t_{n})=-\frac{\ln(|y_{n}|/|y_{n-k}|)}{\ln(t_{n}/t_{n-k})}\ \text{ for }t_{n-k}>1\text{ and } k\in\mathbb{N}^{+}.
\end{split}
\end{equation}
This quantity  is a numerical observation of the ratio $\alpha/\lambda$ when $\|y_{n}\|=O(t_{n}^{-\alpha/\lambda})$ as $t_n\to \infty$.
A similar index was used in~\cite{wang2018long, wang2021mittag}.
Table \ref{tableexam1} shows that the numerical observations $q_{\alpha,\lambda}(t_n)$ agree with the theoretical prediction $q_{\alpha,\lambda}^{*}(t_n):=\alpha/\gamma$ of Theorem~\ref{thm:main}; thus, the numerical solutions have a polynomial decay rate, which is very different from the exponential decay rate of solutions to integer-order ODEs. 

These results show that the  Gr\"unwald-Letnikov scheme captures accurately the decay rate of nonlinear F-ODEs, which is exactly consistent with our theoretical prediction. Other $\mathcal{CM}$-preserving schemes, such as the L1 scheme, yield results that are very similar to those of the Gr\"unwald-Letnikov scheme, so we do not present them here.

\begin{table}
\caption{Observed $q_{\alpha, \lambda}(t_{n})$  for $t_{n}= 10,20,30,40,50$ with $\alpha=0.4$ and $\alpha=0.8$ (in parentheses)}
\begin{center}
\begin{tabular}{lll lll ll }
\hline $t_n$  & $\gamma=1/4$ & $\gamma=1/2$  & $\gamma=1$ & $\gamma=2$ & $\gamma=4$\\
\hline
$10   $ & 1.5949 (3.7105) & 0.7515 (1.8619)  & 0.3727 (0.9154) & 0.1898 (0.4260) & 0.0973 (0.2044)\\
$20   $ & 1.7528 (3.4124) & 0.7901 (1.7053)  &0.3789 (0.8588)  & 0.1900 (0.4166)  & 0.0967 (0.2022)\\
$30  $ & 1.7291 (3.3344) & 0.8002 (1.6656)  &0.3820 (0.8407)  & 0.1904 (0.4130)  & 0.0966 (0.2014)\\
$40  $& 1.7042  (3.2984) & 0.8040 (1.6476)  & 0.3840 (0.8315) & 0.1907 (0.4111)  & 0.0966 (0.2011)\\
$50 $& 1.6861 (3.2776) & 0.8057 (1.6372) & 0.3854 (0.8260) & 0.1910 (0.4098) & 0.0966 (0.2008)\\
\hline
$q_{\alpha, \lambda}^{*}$ &  1.6 (3.2) &  0.8 (1.6)  &  0.4 (0.8)  &  0.2 (0.4)  &  0.1 (0.2) \\
\hline
\end{tabular}
\end{center}
\label{tableexam1}
\end{table}

\subsection{Nonlinear  F-PDEs}\label{sec:NumFPDE}
The estimates of the decay rate of the true solution in Lemma~\ref{lem:non-decay2} and its discrete analogue in  Theorem~\ref{thm:disFPDE} are independent of the dimension $d$ of the spatial region~$\Omega$. In our numerical examples we shall take $d=2$. Thus, consider
F-PDEs of the form~\eqref{eq:FPDEs} with $\Omega\subseteq \mathbb{R}^2$ a bounded convex domain, 
where we discretize in space by a conforming piecewise linear finite element method and in time by a $\mathcal{CM}$-preserving scheme. 

First, we describe briefly  the iterative  numerical algorithm for nonlinear fully discrete schemes. Consider the model F-PDE
\begin{equation}\label{modelFPDE}
\mathcal{D}_{t}^{\alpha}u(x, t)=\nabla\cdot \left( a(u, \nabla u)\nabla u\right)\ \text{ for }t>0,\ x\in \Omega,
\end{equation}
where $a(u, \nabla u)$ is a known nonlinear function; this F-PDE is subject to a Dirichlet boundary condition $u(x, t)=0$ for $x\in \partial \Omega$ and has initial value $u(x, 0)=u_0$, . 
 
The time discretisation of~\eqref{modelFPDE} yields $\mathcal{D}_{h}^{\alpha}U^{n}(x)=\nabla\cdot \left( a(U^{n}, \nabla U^{n})\nabla U^{n}\right)$ for $n\geq 1$, where $U^{n}(x)\approx u(x, t_{n})$ at $t_n=nh$ with time-step $h>0$.

Set $\beta:= h^{\alpha}/\omega_0$. For the particular value $n=1$,  the discretisation is $\mathcal{D}_{h}^{\alpha}U^{1}(x)=(1/\beta) \left(U^{1}(x)-U^{0}(x)\right)$.  This gives $U^1(x)=U^0(x)+\beta \nabla\cdot ( a(U^{1}(x), \nabla U^{n}(x))\nabla U^{1}(x))$.
 
 For $n\geq 2$,  one has $\mathcal{D}_{h}^{\alpha}U^{n}(x)=\frac{1}{h^{\alpha}}(\sum_{k=0}^{n-1}\omega_{k}U^{n-k}+\delta_{n}U^{0})$, 
 where $\delta_{n}=-\sum_{k=0}^{n-1}\omega_{k}$; see equation \eqref{eq:CMsch}. This yields  
 $U^n(x)=\beta \nabla\cdot \left( a(U^{n}(x), \nabla U^{n}(x))\nabla U^{n}(x)\right)-b_n,$
 where $b_n:=\frac{1}{\omega_0} (\sum_{k=1}^{n-1}\omega_{k}U^{n-k}+\delta_{n}U^{0})$. Then for each $n\geq 1$, we need to solve numerically the nonlinear elliptic-type equation 
\begin{equation}\label{eq:ellipticmod}
\begin{split}
 U^n(x)=\beta \nabla\cdot \left( a(U^n(x), \nabla U^n(x))\nabla U^n(x)\right)-b_n,
\end{split}
\end{equation}
where we set $b_1:=-U^0$.

Set $W:=U^n(x)$ for brevity in what follows.  The equation~\eqref{eq:ellipticmod} is solved using a piecewise linear finite element method in the open source computing software system FEALPy \cite{fealpy}.
To do this, multiply both sides of~\eqref{eq:ellipticmod} by the test function $V\in H_{0}^{1}(\Omega)$, and obtain a weak form of the equation by integration by parts:
\begin{equation}\label{eq:weakformu}
\begin{split}
 (W , V)+\beta \left( a(W, \nabla W)\nabla W, \nabla V\right)=-(b_n, V),
\end{split}
\end{equation}
where $(f,g):=\int_{\Omega}fg dx$. 
Let $\tilde W$ be some approximation to $W$ from the finite element space. Set $\delta W=W-\tilde W$. Then \eqref{eq:weakformu} is equivalent to 
\begin{equation}\label{eq:weakformuB}
\begin{split}
 (\tilde W+\delta W, V)+\beta \left( a(\tilde W+\delta W, \nabla (\tilde W+\delta W))\nabla (\tilde W+\delta W), \nabla V\right)=-(b_n, V).
\end{split}
\end{equation}
Take the Taylor expansion of $a(\tilde W+\delta W, \nabla (\tilde W+\delta W))$ about $(\tilde W, \nabla\tilde W)$:
\begin{equation}\label{eq:Tay}
\begin{split}
a(\tilde W+\delta W, \nabla (\tilde W+\delta W))=a(\tilde W, \nabla \tilde W)+J_{W} \delta W+J_{\nabla W} \cdot \nabla \delta W+O(\delta W^2+|\nabla \delta W|^2),
\end{split}
\end{equation}
where $J_{W}:= \partial a(W, \nabla W)/\partial W$ and $J_{\nabla W}:= \partial a(W, \nabla W)/\partial \nabla W$, with both of these functions evaluated at the initial approximation $(\tilde W, \nabla \tilde W)$.
Substituting \eqref{eq:Tay} into \eqref{eq:weakformu} and removing the higher-order terms, we get  
\[
 (\tilde W+\delta W, V)+\beta \left( a(\tilde W, \nabla \tilde W)\nabla \tilde W+ a(\tilde W, \nabla \tilde W)\nabla \delta W+ J_{W} \nabla \tilde W\delta W+J_{\nabla W} \cdot \nabla \tilde W\nabla \delta W, \nabla V  \right)=-(b_n, V).
\]
This is a discrete linear system for $\delta W$, which we solve approximately in our piecewise linear finite element space; then we update our finite element approximation of $W$ by $\tilde W\mapsto \tilde W + \delta W$.

Using Picard iteration, we repeat this updating process until we obtain a numerical approximation to $W$ with a predetermined precision 
and use this as the computed solution for equation \eqref{eq:ellipticmod}.

In this model, $x=(x_1, x_2)^{T}$ and $\nabla u=(\partial_{x_1} u, \partial_{x_2} u)^{T}$.
In the example of Section~\ref{sec:NumpLap}, we have $a(u, \nabla u)=|\nabla u|^{p-2}=(\nabla u, \nabla u)^{(p-2)/2} $,  
$J_{U}= \partial a(\tilde U, \nabla \tilde U)/\partial U =0$
and 
\[
J_{\nabla U}=\left. \frac{\partial}{\partial\nabla U} (\nabla U, \nabla U)^{(p-2)/2}\right|_{\tilde U}
	=\left. (p-2) \left[|\nabla U|^{p-4} \nabla U\right] \right|_{\tilde U}.
\]
Similarly,  in the example of Section~\ref{sec:Numfpme} we have $a(u, \nabla u)= 1/\sqrt{1+|\nabla u|^2}$, $J_{U}=0$
and 
\[
J_{\nabla U}=\left. \frac{\partial}{\partial\nabla U} \left( \frac{1} {\sqrt{1+(\nabla U, \nabla U)}}\right) \right|_{\tilde U}
=\left. - \left[(1+|\nabla U|^2)^{-\frac{3}{2}} \nabla U\right] \right|_{\tilde U}.
\]

\subsubsection{Fractional $p$-Laplace equation}\label{sec:NumpLap}
Let $p\in(1, \infty)$.
The time fractional $p$-Laplace problem is 
\begin{equation}\label{eq:FAC}
\begin{split}
 \left\{
\begin{aligned}
\mathcal{D}_{t}^{\alpha}u(x, t)&=\Delta_{p}(u)\ \text{ for }t>0 \text{ and } x\in \Omega,\\
u(x, t)&=0\ \text{ for }t>0 \text{ and } x\in \partial \Omega,\\
u(x, 0)&=u_0(x) \ \text{ for }x\in \Omega,
\end{aligned}
\right.
\end{split}
\end{equation}
where $\Delta_{p}(u)=\nabla\cdot (|\nabla u|^{p-2} \nabla u)$ and $\mathcal{N}[u]=-\Delta_{p}(u)$. 
It is shown in \cite{vergara2015optimal, dipierro2019decay} that for any $s>1$ one has
$\|u\|^{s-2+p}_{L^{s}(\Omega)}(t)\leq C \int_{\Omega} \nabla u^{s-1}\cdot \left( |\nabla u|^{p-2} \nabla u\right)dx$, where the constant $C=C( \alpha, s, u_0, \Omega,p)>0$, i.e., the structural assumption \eqref{eq:FPstr} holds true for $\gamma=p-1$.
Applying the previous general result given in Lemma \ref{lem:non-decay2}, we can get the following estimates. 

\begin{lemma} \cite[Theorem 1.2]{dipierro2019decay} \label{lem:Fplap}
Suppose that $u$ is a solution of \eqref{eq:FAC}. Then for \tcm{each $s\in(1, \infty)$, 
there exists a positive constant $C$ such that}
$ \|u\|_{L^{s}(\Omega)}(t) \leq \frac{C}{1+t^{\alpha/(p-1)}} \text{ for all } t>0.$
\end{lemma}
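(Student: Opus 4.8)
The plan is to obtain this as a direct corollary of the general decay result in Lemma~\ref{lem:non-decay2}: that lemma already delivers a bound of the form $\|u\|_{L^s(\Omega)}(t)\le C_*/(1+t^{\alpha/\gamma})$ as soon as the structural condition~\eqref{eq:FPstr} is available. Since $\alpha/\gamma=\alpha/(p-1)$ is exactly the claimed exponent, the whole proof reduces to verifying that~\eqref{eq:FPstr} holds for $\mathcal{N}[u]=-\Delta_p(u)=-\nabla\cdot(|\nabla u|^{p-2}\nabla u)$ with the choice $\gamma=p-1$.

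To verify~\eqref{eq:FPstr}, I would first rewrite its right-hand side. Using the homogeneous Dirichlet condition $u=0$ on $\partial\Omega$ from~\eqref{eq:FAC}, integration by parts gives
\[
\int_\Omega u^{s-1}\mathcal{N}[u]\,dx = \int_\Omega \nabla\!\left(u^{s-1}\right)\cdot\left(|\nabla u|^{p-2}\nabla u\right)dx = (s-1)\int_\Omega u^{s-2}|\nabla u|^p\,dx,
\]
so that the target inequality reads $\|u\|_{L^s(\Omega)}^{s-2+p}\le C_s(s-1)\int_\Omega u^{s-2}|\nabla u|^p\,dx$, with the exponent $s-1+\gamma=s-2+p$ matching the one stated in the text.

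The next step is to recognise this as a Sobolev-type inequality after a power substitution. Setting $v:=u^\beta$ with $\beta:=(s-2+p)/p$ (so that $p(\beta-1)=s-2$), a short computation gives $\int_\Omega u^{s-2}|\nabla u|^p\,dx=\beta^{-p}\|\nabla v\|_{L^p(\Omega)}^p$ and $\|u\|_{L^s(\Omega)}^{s-2+p}=\|v\|_{L^{s/\beta}(\Omega)}^{p}$. Hence~\eqref{eq:FPstr} is equivalent to the embedding estimate $\|v\|_{L^{s/\beta}(\Omega)}\le C\|\nabla v\|_{L^p(\Omega)}$ for $v\in W_0^{1,p}(\Omega)$, i.e.\ a Poincar\'e--Sobolev inequality on the bounded domain $\Omega$.

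The main obstacle is establishing this embedding uniformly for the full range $s\in(1,\infty)$. On a bounded domain $W_0^{1,p}(\Omega)$ embeds into $L^q(\Omega)$ only up to the critical exponent $q=p^*=dp/(d-p)$ (when $p<d$), and $s/\beta=sp/(s-2+p)$ can exceed $p^*$ for some ranges of $s$ and small $p$. Here I would invoke the standing boundedness hypothesis ($u_0\in L^\infty$ together with $u$ nonnegative and bounded) and an interpolation argument to bring the relevant integrability exponent below $p^*$ before applying the Sobolev inequality, exactly as carried out in \cite{vergara2015optimal, dipierro2019decay}. Once~\eqref{eq:FPstr} is secured with $\gamma=p-1$, Lemma~\ref{lem:non-decay2} applies verbatim and yields $\|u\|_{L^s(\Omega)}(t)\le C/(1+t^{\alpha/(p-1)})$, completing the proof.
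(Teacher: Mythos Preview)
Your proposal is correct and follows exactly the route indicated in the paper: verify the structural assumption~\eqref{eq:FPstr} for $\mathcal{N}[u]=-\Delta_p(u)$ with $\gamma=p-1$, then invoke Lemma~\ref{lem:non-decay2}. The paper simply cites \cite{vergara2015optimal,dipierro2019decay} for the structural inequality rather than deriving it, whereas you sketch the integration-by-parts and Poincar\'e--Sobolev argument behind it, but the overall strategy is identical.
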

This result corresponds to $\gamma=p-1$ in Lemma~\ref{lem:non-decay2}.
\tcm{Analogously to the continuous case, we assume that for some $s\in (1, \infty)$,  $U^n\in L^s(\Omega)$ satisfies the structural condition \eqref{eq:disFPstr} with $\gamma=p-1$.}
Then the numerical solutions satisfy
\begin{equation*} 
\begin{split}
 \|U^{n}\|_{L^{s}(\Omega)} \leq \frac{C}{1+t_{n}^{\alpha/(p-1)}} \text{ for all } t_{n}>0.
\end{split}
\end{equation*}
where $U^{n}$ is the numerical approximation of $u(x, t)$ at $t_{n}$ given in Theorem \ref{thm:disFPDE}.

In the numerical experiments we take $\Omega=[0, 1]^2$, the initial value $u_{0}(x_1,x_2)=\sin(\pi x_1)\sin(\pi x_2)$, step size $h=1/150$ and $s=2$ to compute $\|U^{n}\|_{L^{2}(\Omega)}$. Similarly to~\eqref{eq:indexp}, we define 
\begin{equation} \label{eq:indexq}
\begin{split}
q_{\alpha,p}(t_{n})=-\frac{\ln(\|U^{n}\|_{L^{2}(\Omega)}/\|U_{n-k}\|_{L^{2}(\Omega)})}{\ln(t_{n}/t_{n-k})}\ \text{ for }t_{n-k}>1\text{ and } k\in\mathbb{N}^{+}.
\end{split}
\end{equation}
Here the index $q_{\alpha, p}$ is the numerical observation decay rate; \tcm{the decay rate $q_{\alpha, p}^{*}$ for both the exact solution  and numerical solution satisfies $q_{\alpha, p}^{*}\geq \alpha/(p-1)$, according to Lemma~\ref{lem:Fplap} and Theorem~\ref{thm:disFPDE} respectively.}
In order to reduce the observation error, 
\tcm{for each $t_n$ we take $k=2,4,6,8,10$ to compute 5 different $q_{\alpha,p}(t_{n})$ from \eqref{eq:indexq}, and then compute the mean of these 5 values as the observed value at~$t_n$.
Tables~\ref{tableexam21} and~\ref{tableexam22} show that the observed values $q_{\alpha, p}$ of the numerical decay rate  are consistent with our theoretical prediction $q_{\alpha, p}^{*}\geq \alpha/(p-1)$.}

\begin{table}
\caption{Observed $q_{\alpha, p}$ with $p=3$ and various values of $\alpha$ for $t_{n}=10,15,20$} 
\begin{center}
\begin{tabular}{lll lll ll }
\hline $t_n$  & $\alpha=0.1$ & $\alpha=0.2$  & $\alpha=0.4$ & $\alpha=0.7$ & $\alpha=0.9$\\
\hline
$10   $ & 0.0509 & 0.1032  & 0.2053 & 0.3573 & 0.4317\\
$15   $ &0.0558 & 0.0921  &0.2002  & 0.3437  & 0.4704\\
$20  $ & 0.0526 & 0.0945  &0.1988  & 0.4060  & 0.4775\\
\hline
$q_{\alpha, p}^{*}$ &  0.05 &  0.1  &  0.2  &  0.35  &  0.45 \\
\hline
\end{tabular}
\end{center}
\label{tableexam21}
\end{table}

\begin{table}
\caption{Observed $q_{\alpha, p}$ with $\alpha=0.9$ and various values of $p$ for $t_{n}=10,15,20$} 
\begin{center}
\begin{tabular}{lll lll ll }
\hline $t_n$  & $p=1.5$ & $p=2$  & $p=3$ & $p=4$ & $p=5$\\
\hline
$10   $ & 1.8083 & 0.9102  & 0.4317 & 0.3116 & 0.2316\\
$15   $ & 1.8055& 0.9070  &0.4704  & 0.3101  & 0.2310\\
$20  $ & 1.8041 & 0.9054  &0.4775 & 0.3092  & 0.2305\\
\hline
$q_{\alpha, p}^{*}$ &  1.8 &  0.9  &  0.45  &  0.30  &  0.215 \\
\hline
\end{tabular}
\end{center}
\label{tableexam22}
\end{table}

\subsubsection{Fractional mean curvature equation}\label{sec:Numfpme}
Consider the nonlinear time fractional mean curvature equation
\begin{equation}\label{eq:FMC}
\begin{split}
 \left\{
\begin{aligned}
\mathcal{D}_{t}^{\alpha}u(x, t)&=\nabla\cdot \left( \frac{\nabla u} {\sqrt{1+|\nabla u|^2}}\right)\ \text{ for }t>0 \text{ and } x\in \Omega,\\
u(x, t)&=0\ \text{ for }t>0 \text{ and } x\in \partial \Omega,\\
u(x, 0)&=u_0(x)\ \text{ for } x\in \Omega.
\end{aligned}
\right.
\end{split}
\end{equation}
Here the right-hand side of the PDE in~\eqref{eq:FMC} is the mean curvature of the hypersurface described by the graph of the function $u$. 
It is shown in \cite{dipierro2019decay} that for any $s>1$,  
$\|u\|^{s}_{L^{s}(\Omega)}(t)\leq C \int_{\Omega} \frac{\nabla u \cdot \nabla u^{s-1}} {\sqrt{1+|\nabla u|^2}} dx$, where the constant $C=C( \alpha, s, u_0, \Omega)>0$.  
That is, the structural assumption \eqref{eq:FPstr} holds true with $\gamma=1$, and we can invoke Lemma~\ref{lem:non-decay2} to derive a long-time decay estimate.

\begin{lemma}\cite[Theorem 1.5]{dipierro2019decay} \label{lem:Fmeancurv}
Let $u$ be a solution 
of \eqref{eq:FMC} and assume that
$\sup_{x\in\Omega, t>0} |\nabla u(x, t)| <\infty$.
Then for any $s\in(1, \infty)$, $ \|u\|_{L^{s}(\Omega)}(t) \leq \frac{C}{1+t^{\alpha}}$, $t>0$.
\end{lemma}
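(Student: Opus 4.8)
The plan is to verify that the mean-curvature operator $\mathcal{N}[u]=-\nabla\cdot\bigl(\nabla u/\sqrt{1+|\nabla u|^2}\bigr)$ satisfies the structural condition \eqref{eq:FPstr} with $\gamma=1$, and then to invoke the general decay result of Lemma~\ref{lem:non-decay2}. All the substantive work lies in establishing the structural inequality; once it is in hand, the conclusion $\|u\|_{L^{s}(\Omega)}(t)\le C/(1+t^{\alpha})$ follows immediately from \eqref{eq:FPdecay} specialised to $\gamma=1$, since then $\alpha/\gamma=\alpha$.

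First I would integrate by parts in the quantity $\int_\Omega u^{s-1}\mathcal{N}[u]\,dx$. Because $u$ vanishes on $\partial\Omega$, so does $u^{s-1}$, the boundary terms drop out, and one obtains
\[
\int_\Omega u^{s-1}\,\mathcal{N}[u]\,dx
 = \int_\Omega \frac{\nabla u\cdot\nabla(u^{s-1})}{\sqrt{1+|\nabla u|^2}}\,dx
 = (s-1)\int_\Omega \frac{u^{s-2}\,|\nabla u|^2}{\sqrt{1+|\nabla u|^2}}\,dx ,
\]
which matches the right-hand side quoted for this problem in the excerpt. The next step exploits the hypothesis $M:=\sup_{x\in\Omega,\,t>0}|\nabla u(x,t)|<\infty$: it bounds the denominator uniformly by $\sqrt{1+M^2}$, so that
\[
\int_\Omega u^{s-1}\,\mathcal{N}[u]\,dx
 \ge \frac{s-1}{\sqrt{1+M^2}}\int_\Omega u^{s-2}\,|\nabla u|^2\,dx
 = \frac{4(s-1)}{s^2\sqrt{1+M^2}}\int_\Omega \bigl|\nabla(u^{s/2})\bigr|^2\,dx ,
\]
where I used $\nabla(u^{s/2})=\tfrac{s}{2}u^{s/2-1}\nabla u$. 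Finally, since $u^{s/2}=0$ on $\partial\Omega$, the Poincar\'e inequality gives $\|\nabla(u^{s/2})\|_{L^2(\Omega)}^2\ge C_P\|u^{s/2}\|_{L^2(\Omega)}^2=C_P\|u\|_{L^{s}(\Omega)}^{s}$. This yields exactly \eqref{eq:FPstr} with $\gamma=1$ and $C_s=s^2\sqrt{1+M^2}/\bigl(4(s-1)C_P\bigr)$.

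The main obstacle is controlling the nonlinear denominator $\sqrt{1+|\nabla u|^2}$: it is precisely here that the uniform gradient bound is indispensable, because without it the operator degenerates and no coercivity estimate of the above form survives. A secondary technical point is the range $s\in(1,2)$, where $u^{s-2}$ carries a negative exponent; here one must justify the chain-rule identity $u^{s-2}|\nabla u|^2=\tfrac{4}{s^2}|\nabla(u^{s/2})|^2$ in the weak (almost-everywhere) sense on the set $\{u>0\}$, or else replace $u$ by $u+\varepsilon$, carry out the estimate, and pass to the limit $\varepsilon\to0^+$. Once the structural inequality is secured, Lemma~\ref{lem:non-decay2} closes the argument with $\gamma=1$.
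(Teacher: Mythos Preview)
Your proposal is correct and follows exactly the route the paper sketches: verify the structural condition~\eqref{eq:FPstr} with $\gamma=1$ (via integration by parts, the uniform gradient bound to control the denominator $\sqrt{1+|\nabla u|^2}$, and a Poincar\'e inequality applied to $u^{s/2}$), then invoke Lemma~\ref{lem:non-decay2}. The paper itself does not reproduce these details but simply cites \cite{dipierro2019decay} for the inequality $\|u\|_{L^{s}(\Omega)}^{s}\le C\int_\Omega \frac{\nabla u\cdot\nabla u^{s-1}}{\sqrt{1+|\nabla u|^2}}\,dx$ and then states the lemma; your argument fills in precisely what is being quoted.
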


Like the continuous case, \tcm{if we assume that the time discrete scheme satisfies the structural assumption \eqref{eq:disFPstr} for some $s\in (1,\infty)$ with $\gamma=1$, it then follows} from Theorem~\ref{thm:disFPDE} that
\begin{equation*} 
\begin{split}
 \|U^{n}\|_{L^{s}(\Omega)} \leq \frac{C}{1+t_{n}^{\alpha}}\ \text{ for all } t_{n}>0.
\end{split}
\end{equation*}
\tcm{Hence $q_{\alpha}^{*}\geq \alpha$, where $q_{\alpha}^{*}$ is the predicted decay rate of the numerical solution.}

\begin{table}
\caption{Observed $q_{\alpha}$ for various values of $\alpha$ for $t_{n}=10,15,20$} 
\begin{center}
\begin{tabular}{ll lll ll }
\hline $t_n$  & $\alpha=0.2$  & $\alpha=0.4$ & $\alpha=0.7$ & $\alpha=0.9$\\
\hline
$10   $  & 0.3495  & 0.4664 & 0.7479 & 0.9655\\
$15   $ & 0.3135  &0.4445  & 0.7306  & 0.9421\\
$20  $  & 0.2948  &0.4338  & 0.7226  & 0.9311\\
\hline
$q_{\alpha}^{*}$  &  0.2  &  0.4  &  0.7  &  0.9 \\
\hline
\end{tabular}
\end{center}
\label{tableexam31}
\end{table}

In the numerical experiments we take $\Omega=[0, 1]^2$, the initial value $u_{0}(x_1,x_2)=10 \sin(\pi x_1)\sin(\pi x_2)$, step size $h=1/150$ and $s=2$ to compute $\|U^{n}\|_{L^{2}(\Omega)}$.
Table~\ref{tableexam31} shows that the observed values of the numerical decay rates \tcm{(which are computed similarly to Section~\ref{sec:NumpLap}) agree with the above prediction that $q_{\alpha}^{*}\geq \alpha$.} \\ 
 
\tcm{
\emph{Conclusion.}
We emphasise that our numerical results for the nonlinear F-PDEs of Section~\ref{sec:NumExp} exhibit polynomial decay rates that are  typical  of long-time solutions to time-fractional problems. 
In this paper, we have considered the decay rates of exact and computed solutions for this class of problems. The more difficult question of convergence of computed solutions to exact solutions will be examined in a future paper. }

\section*{Acknowledgement}
The authors are grateful to Dr.~Chunyu Chen and Professor Huayi Wei \tcm{of Xiangtan University} for much help in the implementation of numerical experiments.

\bibliographystyle{alpha}
\bibliography{NonFODE}

\end{document}